\title{Complete Solution of a Constrained Tropical Optimization Problem with Application to Location Analysis\thanks{Relational and Algebraic Methods in Computer Science, P. Hoefner, P. Jipsen, W. Kahl, M. E. Mueller, eds., vol. 8428 of Lecture Notes in Computer Science, pp. 362-378, Springer, 2014.}}
\author{Nikolai Krivulin\thanks{Faculty of Mathematics and Mechanics, Saint Petersburg State University, 28 Universitetsky Ave., Saint Petersburg, 198504, Russia, 
nkk@math.spbu.ru.}\thanks{The work was supported in part by the Russian Foundation for Humanities under Grant \#13-02-00338.}
}
\date{}
\newtheorem{theorem}{Theorem}
\newtheorem{lemma}[theorem]{Lemma}
\newtheorem{corollary}[theorem]{Corollary}
\begin{document}

\maketitle

\begin{abstract}
We present a multidimensional optimization problem that is formulated and solved in the tropical mathematics setting. The problem consists of minimizing a nonlinear objective function defined on vectors over an idempotent semifield by means of a conjugate transposition operator, subject to constraints in the form of linear vector inequalities. A complete direct solution to the problem under fairly general assumptions is given in a compact vector form suitable for both further analysis and practical implementation. We apply the result to solve a multidimensional minimax single facility location problem with Chebyshev distance and with inequality constraints imposed on the feasible location area. 
\\

\textbf{Key-Words:} idempotent semifield, tropical mathematics, minimax optimization problem, single facility location problem, Chebyshev distance.
\\

\textbf{MSC (2010):} 65K10, 15A80, 65K05, 90C48, 90B85
\end{abstract}

\section{Introduction}

Tropical (idempotent) mathematics encompasses various aspects of the theory and applications of semirings with idempotent addition and has its origin in a few pioneering works by Pandit \cite{Pandit1961Anew}, Cuninghame-Green \cite{Cuninghamegreen1962Describing}, Giffler \cite{Giffler1963Scheduling}, Vorob{'}ev \cite{Vorobjev1963Theextremal} and Romanovski{\u\i} \cite{Romanovskii1964Asymptotic}. At the present time, the literature on the topic contains several monographs, including those by Carr{\'e} \cite{Carre1979Graphs}, Cuninghame-Green \cite{Cuninghamegreen1979Minimax}, U.~Zimmermann \cite{Zimmermann1981Linear}, Baccelli et al. \cite{Baccelli1993Synchronization}, Kolokoltsov and Maslov \cite{Kolokoltsov1997Idempotent}, Golan \cite{Golan2003Semirings}, Heidergott, Olsder and van der Woude \cite{Heidergott2006Maxplus}, Gondran and Minoux \cite{Gondran2008Graphs}, and Butkovi\v{c} \cite{Butkovic2010Maxlinear}; as well as a rich variety of contributed papers.

Optimization problems that are formulated and solved in the tropical mathematics setting come from various application fields and form a noteworthy research domain within the research area. Certain optimization problems have appeared in the early paper \cite{Cuninghamegreen1962Describing}, and then the problems were investigated in many works, including \cite{Cuninghamegreen1979Minimax,Zimmermann1981Linear,Gondran2008Graphs,Butkovic2010Maxlinear}.

Tropical mathematics provides a useful framework for solving optimization problems in location analysis. Specifically, a solution in terms of tropical mathematics has been proposed by Cuninghame-Green \cite{Cuninghamegreen1991Minimax,Cuninghamegreen1994Minimax} to solve single facility location problems defined on graphs. A different but related approach to location problems on graphs and networks has been developed by K.~Zimmermann \cite{Zimmermann1992Optimization}, Hudec and K.~Zimmermann \cite{Hudec1993Aservice,Hudec1999Biobjective}, Tharwat and K.~Zimmermann \cite{Tharwat2010Oneclass} on the basis of the concept of max-separable functions.
 
Multidimensional minimax location problems with Chebyshev distance arise in various applications, including the location of emergency service facility in urban planning and the location of a component on a chip in electronic circuit manufacturing (see, e.g., Hansen, Peeters and Thisse \cite{Hansen1980Location,Hansen1981Constrained}). The two-dimensional problems on the plane without constraints can be solved directly on the basis of geometric arguments, as demonstrated by Sule \cite{Sule2001Logistics} and Moradi and Bidkhori \cite{Moradi2009Single}. The solution of the multidimensional constrained problems is less trivial and requires different approaches. These problems can be solved, for instance, by using standard linear programming techniques which, however, generally offer iterative procedures and do not guarantee direct solutions.

A strict tropical mathematics approach to solve both unconstrained and constrained minimax location problems with Chebyshev distance was developed by Krivulin \cite{Krivulin2011Analgebraic,Krivulin2012Anew}, and Krivulin and K.~Zimmermann \cite{Krivulin2013Direct}. The main result of \cite{Krivulin2011Analgebraic} is a direct solution to the unconstrained problem obtained by using the spectral properties of matrices in idempotent algebra. The application of another technique in \cite{Krivulin2012Anew,Krivulin2013Direct}, which is based on the derivation of sharp bounds on the objective function, shows that the solution in \cite{Krivulin2011Analgebraic} is complete.

In this paper, a new minimax Chebyshev location problem with an extended set of constraints is taken to both motivate and illustrate the development of the solution to a new general tropical optimization problem. The problem is to minimize a nonlinear objective function defined on vectors over a general idempotent semifield by means of a conjugate transposition operator. The problem involves constraints imposed on the solution set in the form of linear vector inequalities given by a matrix, and two-sided boundary constraints.

To solve the problem, we use the approach, which is proposed in \cite{Krivulin2013Amultidimensional,Krivulin2014Aconstrained} and combines the derivation of a sharp bound on the objective function with the solution of linear inequalities. The approach is based on the introduction of an auxiliary variable as a parameter, and the reduction of the optimization problem to the solution of a parametrized system of linear inequalities. Under fairly general assumptions, we obtain a complete direct solution to the problem and represent the solution in a compact vector form. The obtained result is then applied to solve the Chebyshev location problem, which motivated this study.

The paper is organized as follows. In Section~\ref{S-PDN}, we offer an introduction to idempotent algebra to provide a formal framework for the study in the rest of the paper. Section~\ref{S-SLN} offers the preliminary results on the solution of linear inequalities, which form a basis for later proofs. The main result is included in Section~\ref{S-OP}, which starts with a discussion of previously solved problems. Furthermore, we describe the problem under study, present a complete direct solution to the problem, consider particular cases, and give illustrative examples. Finally, application of the results to location analysis is discussed in Section~\ref{S-ALA}.

\section{Preliminary Definitions and Notation}
\label{S-PDN}

We start with a short, concise introduction to the key definitions, notation, and preliminary results in idempotent algebra, which is to provide a proper context for solving tropical optimization problems in the subsequent sections. The introduction is mainly based on the notation and results suggested in \cite{Krivulin2006Solution,Krivulin2009Onsolution,Krivulin2012Anew,Krivulin2013Amultidimensional}, which offer strong possibilities for deriving direct solutions in a compact form. Further details on both introductory and advanced levels are available in various works published on the topic, including \cite{Cuninghamegreen1979Minimax,Carre1979Graphs,Zimmermann1981Linear,Baccelli1993Synchronization,Kolokoltsov1997Idempotent,Golan2003Semirings,Heidergott2006Maxplus,Akian2007Maxplus,Gondran2008Graphs,Butkovic2010Maxlinear}.

\subsection{Idempotent Semifield}

An idempotent semifield is an algebraic system $(\mathbb{X},\oplus,\otimes,\mathbb{0},\mathbb{1})$, where $\mathbb{X}$ is a non-empty carrier set, $\oplus$ and $\otimes$ are binary operations, called addition and multiplication, $\mathbb{0}$ and $\mathbb{1}$ are distinct elements, called zero and one; such that $(\mathbb{X},\oplus,\mathbb{0})$ is a commutative idempotent monoid, $(\mathbb{X},\otimes,\mathbb{1})$ is an abelian group, multiplication distributes over addition, and $\mathbb{0}$ is absorbing for multiplication.

In the semifield, addition is idempotent, which means the equality $x\oplus x=x$ is valid for each $x\in\mathbb{X}$. The addition induces a partial order relation such that $x\leq y$ if and only if $x\oplus y=y$ for $x,y\in\mathbb{X}$. Note that $\mathbb{0}$ is the least element in terms of this order, and so the inequality $x\ne\mathbb{0}$ implies $x>\mathbb{0}$.

Furthermore, with respect to this partial order, addition exhibits an extremal property in the form of the inequalities $x\oplus y\geq x$ and $x\oplus y\geq y$. Both addition and multiplication are monotone in each argument, which implies that the inequalities $x\leq y$ and $u\leq v$ result in the inequalities $x\oplus u\leq y\oplus v$ and $x\otimes u\leq y\otimes v$. These properties lead, in particular, to the equivalence of the inequality $x\oplus y\leq z$ with the two simultaneous inequalities $x\leq z$ and $y\leq z$.

Multiplication is invertible to allow every non-zero $x\in\mathbb{X}$ to have an inverse $x^{-1}$ such that $x^{-1}\otimes x=\mathbb{1}$. The multiplicative inversion is antitone in the sense that if $x\leq y$ then $x^{-1}\geq y^{-1}$ for all non-zero $x$ and $y$.  

The integer power indicates iterated product defined, for each non-zero $x\ne\mathbb{0}$ and integer $p\geq1$, as $x^{p}=x^{p-1}\otimes x$, $x^{-p}=(x^{-1})^{p}$, $x^{0}=\mathbb{1}$ and $\mathbb{0}^{p}=\mathbb{0}$. We suppose the rational exponents can be defined as well, and take the semifield to be algebraically closed (radicable).

In what follows, the multiplication sign $\otimes$ will be omitted to save writing.

Typical examples of the idempotent semifield under consideration include $\mathbb{R}_{\max,+}=(\mathbb{R}\cup\{-\infty\},\max,+,-\infty,0)$, $\mathbb{R}_{\min,+}=(\mathbb{R}\cup\{+\infty\},\min,+,+\infty,0)$, $\mathbb{R}_{\max,\times}=(\mathbb{R}_{+}\cup\{0\},\max,\times,0,1)$, and $\mathbb{R}_{\min,\times}=(\mathbb{R}_{+}\cup\{+\infty\},\min,\times,+\infty,1)$, where $\mathbb{R}$ denotes the set of real numbers and $\mathbb{R}_{+}=\{x\in\mathbb{R}|x>0\}$. 

Specifically, the semifield $\mathbb{R}_{\max,+}$ is equipped with the maximum operator in the role of addition, and arithmetic addition as multiplication. Zero and one are defined as $-\infty$ and $0$, respectively. For each $x\in\mathbb{R}$, there exists the inverse $x^{-1}$, which is equal to $-x$ in ordinary notation. The power $x^{y}$ can be defined for all $x,y\in\mathbb{R}$ (and thus for rational $y$) to coincide with the arithmetic product $xy$. The partial order induced by addition agrees with the usual linear order on $\mathbb{R}$.

\subsection{Matrix and Vector Algebra}

Consider matrices over the idempotent semifield and denote the set of matrices with $m$ rows and $n$ columns by $\mathbb{X}^{m\times n}$. A matrix with all zero entries is the zero matrix. A matrix is column- (row-) regular if it has no zero columns (rows).

Addition, multiplication, and scalar multiplication of matrices follow the usual rules. For any matrices $\bm{A}=(a_{ij})\in\mathbb{X}^{m\times n}$, $\bm{B}=(b_{ij})\in\mathbb{X}^{m\times n}$ and $\bm{C}=(c_{ij})\in\mathbb{X}^{n\times l}$, and a scalar $x\in\mathbb{X}$, these operations are performed according to the entry-wise formulas
$$
\{\bm{A}\oplus\bm{B}\}_{ij}
=
a_{ij}
\oplus
b_{ij},
\qquad
\{\bm{A}\bm{C}\}_{ij}
=
\bigoplus_{k=1}^{n}a_{ik}c_{kj},
\qquad
\{x\bm{A}\}_{ij}
=
xa_{ij}.
$$

The extremal property of the scalar addition extends to the matrix addition, which implies the entry-wise inequalities $\bm{A}\oplus\bm{B}\geq\bm{A}$ and $\bm{A}\oplus\bm{B}\geq\bm{B}$. All matrix operations are entry-wise monotone in each argument. The inequality $\bm{A}\oplus\bm{B}\leq\bm{C}$ is equivalent to the two inequalities $\bm{A}\leq\bm{C}$ and $\bm{B}\leq\bm{C}$.

Furthermore, we concentrate on square matrices of order $n$ in the set $\mathbb{X}^{n\times n}$. A matrix that has the diagonal entries set to $\mathbb{1}$, and the off-diagonal entries to $\mathbb{0}$ is the identity matrix, which is denoted by $\bm{I}$.

The integer power of a square matrix $\bm{A}$ is routinely defined as $\bm{A}^{0}=\bm{I}$ and $\bm{A}^{p}=\bm{A}^{p-1}\bm{A}=\bm{A}\bm{A}^{p-1}$ for all $p\geq1$.

The trace of a matrix $\bm{A}=(a_{ij})$ is given by
$$
\mathop\mathrm{tr}\bm{A}
=
a_{11}\oplus\cdots\oplus a_{nn}.
$$

A matrix that consists of one column (row) is a column (row) vector. In the following, all vectors are regarded as column vectors, unless otherwise specified. The set of column vectors of length $n$ is denoted by $\mathbb{X}^{n}$. A vector with all zero elements is the zero vector. A vector is called regular if it has no zero components.

Let $\bm{x}=(x_{i})$ be a non-zero vector. The multiplicative conjugate transpose of $\bm{x}$ is a row vector $\bm{x}^{-}=(x_{i}^{-})$, where $x_{i}^{-}=x_{i}^{-1}$ if $x_{i}>\mathbb{0}$, and $x_{i}^{-}=\mathbb{0}$ otherwise.

It follows from the antitone property of the inverse operation that, for regular vectors $\bm{x}$ and $\bm{y}$, the inequality $\bm{x}\leq\bm{y}$ implies that $\bm{x}^{-}\geq\bm{y}^{-}$ and vice versa.

The conjugate transposition exhibits the following properties, which are easy to verify. First, note that $\bm{x}^{-}\bm{x}=\mathbb{1}$ for each non-zero vector $\bm{x}$.

Suppose that $\bm{x},\bm{y}\in\mathbb{X}^{n}$ are regular vectors. Then, the matrix inequality $\bm{x}\bm{y}^{-}\geq(\bm{x}^{-}\bm{y})^{-1}\bm{I}$ holds entry-wise, and becomes $\bm{x}\bm{x}^{-}\geq\bm{I}$ if $\bm{y}=\bm{x}$.

Finally, for any regular vector $\bm{x}\in\mathbb{X}^{n}$, if a matrix $\bm{A}\in\mathbb{X}^{n\times n}$ is row-regular, then $\bm{A}\bm{x}$ is a regular vector. If $\bm{A}$ is column-regular, then $\bm{x}^{-}\bm{A}$ is regular.

\section{Solutions to Linear Inequalities}
\label{S-SLN}

We now present solutions to linear vector inequalities, which form the basis for later investigation of constrained optimization problems. These solutions are often obtained as consequences to the solution of the corresponding equations, and are known under diverse assumptions, at different levels of generality, and in various forms (see, e.g., \cite{Carre1979Graphs,Cuninghamegreen1979Minimax,Zimmermann1981Linear,Baccelli1993Synchronization,Akian2007Maxplus,Butkovic2010Maxlinear}).

In this section we follow the results in \cite{Krivulin2006Solution,Krivulin2009Onsolution,Krivulin2009Methods,Krivulin2013Amultidimensional,Krivulin2013Direct}, which offer a framework to represent the solutions in a compact vector form.

Suppose that, given a matrix $\bm{A}\in\mathbb{X}^{m\times n}$ and a regular vector $\bm{d}\in\mathbb{X}^{m}$, the problem is to find all regular vectors $\bm{x}\in\mathbb{X}^{n}$ that satisfy the inequality
\begin{equation}
\bm{A}\bm{x}
\leq
\bm{d}.
\label{I-Axd}
\end{equation}

The next result offers a solution obtained as a consequence of the solution to the corresponding equation \cite{Krivulin2009Onsolution,Krivulin2009Methods}, and by independent proof \cite{Krivulin2013Direct}. 

\begin{lemma}
\label{L-Axd}
For every column-regular matrix $\bm{A}$ and regular vector $\bm{d}$, all regular solutions to inequality \eqref{I-Axd} are given by
\begin{equation*}
\bm{x}
\leq
(\bm{d}^{-}\bm{A})^{-}.
\label{I-xdA}
\end{equation*}
\end{lemma}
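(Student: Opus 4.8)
The plan is to prove the two directions of set equality: every vector satisfying $\bm{x} \leq (\bm{d}^{-}\bm{A})^{-}$ is a regular solution of $\bm{A}\bm{x} \leq \bm{d}$, and conversely every regular solution satisfies $\bm{x} \leq (\bm{d}^{-}\bm{A})^{-}$. Before anything else I would record that the expression makes sense: since $\bm{d}$ is regular and $\bm{A}$ is column-regular, the row vector $\bm{d}^{-}\bm{A}$ is regular (by the last stated property of conjugate transposition), so its conjugate transpose $(\bm{d}^{-}\bm{A})^{-}$ is a well-defined regular column vector.

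For sufficiency, suppose $\bm{x} \leq (\bm{d}^{-}\bm{A})^{-}$. By monotonicity of matrix multiplication, $\bm{A}\bm{x} \leq \bm{A}(\bm{d}^{-}\bm{A})^{-}$, so it suffices to show $\bm{A}(\bm{d}^{-}\bm{A})^{-} \leq \bm{d}$. Writing $\bm{u}^{-} = \bm{d}^{-}\bm{A}$, the key inequality to invoke is $\bm{x}\bm{y}^{-} \geq (\bm{x}^{-}\bm{y})^{-1}\bm{I}$ with a suitable choice of vectors, or more directly the identity $\bm{y}^{-}\bm{y} = \mathbb{1}$ applied entrywise. Concretely, for the $i$-th component, $\{\bm{A}(\bm{d}^{-}\bm{A})^{-}\}_{i} = \bigoplus_{k} a_{ik} \{(\bm{d}^{-}\bm{A})^{-}\}_{k} = \bigoplus_{k} a_{ik} (\bigoplus_{j} d_{j}^{-1} a_{jk})^{-1}$; since each term $\bigoplus_{j} d_{j}^{-1} a_{jk} \geq d_{i}^{-1} a_{ik}$, antitonicity of inversion gives $(\bigoplus_{j} d_{j}^{-1} a_{jk})^{-1} \leq d_{i} a_{ik}^{-1}$ whenever $a_{ik} \neq \mathbb{0}$, so $a_{ik}(\bigoplus_{j} d_{j}^{-1} a_{jk})^{-1} \leq d_{i}$, and taking the sum over $k$ preserves this bound. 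Regularity of $\bm{x}$ when $\bm{x}$ is chosen equal to the upper bound follows from regularity of $(\bm{d}^{-}\bm{A})^{-}$, and any smaller regular $\bm{x}$ is admitted by hypothesis.

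For necessity, suppose $\bm{x}$ is a regular vector with $\bm{A}\bm{x} \leq \bm{d}$. Multiplying on the left by the row vector $\bm{d}^{-}$ and using monotonicity gives $\bm{d}^{-}\bm{A}\bm{x} \leq \bm{d}^{-}\bm{d} = \mathbb{1}$. Now $\bm{d}^{-}\bm{A}$ is a regular row vector and $\bm{x}$ is a regular column vector, so I can apply the antitone property of conjugate transposition for regular vectors: from $(\bm{d}^{-}\bm{A})\bm{x} \leq \mathbb{1}$, transposing both sides — equivalently, using that $\bm{v}\bm{x} \leq \mathbb{1}$ together with $\bm{x}^{-}\bm{x} = \mathbb{1}$ and the inequality $\bm{x}\bm{x}^{-} \geq \bm{I}$ — yields $\bm{x} \leq \bm{x}(\bm{d}^{-}\bm{A}\bm{x})(\bm{d}^{-}\bm{A}\bm{x})^{-} \leq \dots$; more cleanly, from $\bm{w}\bm{x}\leq\mathbb{1}$ with $\bm{w} = \bm{d}^{-}\bm{A}$ row-regular we get $\bm{x} \leq \bm{w}^{-}$ directly, since for each $i$, $\bigoplus_{k} w_{k} x_{i}$ has $w_{i}x_{i}$ among its terms (taking $\bm{w}$'s $i$-th entry) — wait, one must be careful that $\bm{w}\bm{x}$ is a scalar, so instead: $w_{i}x_{i} \leq \bm{w}\bm{x} \leq \mathbb{1}$ for each $i$, hence $x_{i} \leq w_{i}^{-1} = \{\bm{w}^{-}\}_{i}$, i.e. $\bm{x} \leq (\bm{d}^{-}\bm{A})^{-}$, as required.

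The main obstacle is keeping the entrywise bookkeeping honest when passing between scalar inner products, row/column vectors, and the conjugate transpose — in particular ensuring that the column-regularity of $\bm{A}$ and regularity of $\bm{d}$ are exactly what is needed for every inverse that appears to be defined, and that no spurious zero entries break the term-by-term comparisons. Once the right auxiliary facts ($\bm{x}^{-}\bm{x} = \mathbb{1}$, antitonicity of inversion, $\bm{w}^{-}\bm{w} \geq \bm{I}$-type inequalities) are lined up, each direction is a short monotonicity-plus-inversion argument.
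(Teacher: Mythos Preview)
The paper does not supply an inline proof of this lemma; it simply cites earlier works (\cite{Krivulin2009Onsolution,Krivulin2009Methods,Krivulin2013Direct}) for the argument. Your direct proof is correct and is essentially the standard one: sufficiency follows from the entrywise verification that $\bm{A}(\bm{d}^{-}\bm{A})^{-}\leq\bm{d}$ (using $\bigoplus_{j}d_{j}^{-1}a_{jk}\geq d_{i}^{-1}a_{ik}$ and antitonicity of inversion, with the case $a_{ik}=\mathbb{0}$ handled by absorption), and necessity follows from left-multiplying $\bm{A}\bm{x}\leq\bm{d}$ by $\bm{d}^{-}$ to get $\bm{d}^{-}\bm{A}\bm{x}\leq\mathbb{1}$, then extracting $w_{i}x_{i}\leq\mathbb{1}$ term by term and inverting.

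One cosmetic remark: in the necessity paragraph your final clean route---``$w_{i}x_{i}\leq\bm{w}\bm{x}\leq\mathbb{1}$ hence $x_{i}\leq w_{i}^{-1}$''---is the whole argument; the earlier detours invoking $\bm{x}\bm{x}^{-}\geq\bm{I}$ and the identity $\bm{x}^{-}\bm{x}=\mathbb{1}$ are not needed and can be deleted. Likewise, the sentence about ``regularity of $\bm{x}$ when chosen equal to the upper bound'' is superfluous: regularity of $\bm{x}$ is a hypothesis, not a conclusion, in both directions.
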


Furthermore, we consider the following problem: given a matrix $\bm{A}\in\mathbb{X}^{n\times n}$ and a vector $\bm{b}\in\mathbb{X}^{n}$, find all regular vectors $\bm{x}\in\mathbb{X}^{n}$ that satisfy the inequality
\begin{equation}
\bm{A}\bm{x}\oplus\bm{b}
\leq
\bm{x}.
\label{I-Axbx}
\end{equation}

To describe a complete solution to the problem, we define a function that maps every matrix $\bm{A}\in\mathbb{X}^{n\times n}$ to a scalar given by
$$
\mathop\mathrm{Tr}(\bm{A})
=
\mathop\mathrm{tr}\bm{A}\oplus\cdots\oplus\mathop\mathrm{tr}\bm{A}^{n}.
$$

We also employ the asterate operator (also known as the Kleene star), which takes $\bm{A}$ to the matrix
$$
\bm{A}^{\ast}
=
\bm{I}\oplus\bm{A}\oplus\cdots\oplus\bm{A}^{n-1}.
$$

Note that the asterate possesses a useful property established by Carr{\'e} \cite{Carre1971Analgebra}. The property states that each matrix $\bm{A}$ with $\mathop\mathrm{Tr}(\bm{A})\leq\mathbb{1}$ satisfies the entry-wise inequality $\bm{A}^{k}\leq\bm{A}^{\ast}$ for all integer $k\geq0$. Specifically, this property makes the equality $\bm{A}^{\ast}\bm{A}^{\ast}=\bm{A}^{\ast}$ valid provided that $\mathop\mathrm{Tr}(\bm{A})\leq\mathbb{1}$.

A direct solution to inequality \eqref{I-Axbx} is given as follows \cite{Krivulin2006Solution,Krivulin2009Onsolution,Krivulin2013Amultidimensional}.
\begin{theorem}\label{T-Axbx}
For every matrix $\bm{A}$ and vector $\bm{b}$, the following statements hold:
\begin{enumerate}
\item If $\mathop\mathrm{Tr}(\bm{A})\leq\mathbb{1}$, then all regular solutions to \eqref{I-Axbx} are given by $\bm{x}=\bm{A}^{\ast}\bm{u}$, where $\bm{u}$ is any regular vector such that $\bm{u}\geq\bm{b}$.
\item If $\mathop\mathrm{Tr}(\bm{A})>\mathbb{1}$, then there is no regular solution.
\end{enumerate}
\end{theorem}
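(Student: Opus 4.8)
The plan is to prove both statements by working with the partial order and the monotonicity properties of the semifield, exploiting the iterative structure of the inequality $\bm{A}\bm{x}\oplus\bm{b}\leq\bm{x}$.

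For the first statement, assume $\mathop\mathrm{Tr}(\bm{A})\leq\mathbb{1}$. I would argue in two directions. For sufficiency, take $\bm{x}=\bm{A}^{\ast}\bm{u}$ with $\bm{u}$ regular and $\bm{u}\geq\bm{b}$; then $\bm{A}\bm{x}\oplus\bm{b}=\bm{A}\bm{A}^{\ast}\bm{u}\oplus\bm{b}$, and since $\bm{A}\bm{A}^{\ast}\leq\bm{A}^{\ast}\bm{A}^{\ast}=\bm{A}^{\ast}$ (using the Carr\'e property cited above and the identity $\bm{A}^{\ast}\bm{A}^{\ast}=\bm{A}^{\ast}$) together with $\bm{b}\leq\bm{u}\leq\bm{A}^{\ast}\bm{u}$ (because $\bm{A}^{\ast}\geq\bm{I}$), we get $\bm{A}\bm{x}\oplus\bm{b}\leq\bm{A}^{\ast}\bm{u}=\bm{x}$; regularity of $\bm{x}$ follows from $\bm{x}\geq\bm{u}$. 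For necessity, suppose $\bm{x}$ is any regular solution. Iterating the inequality $\bm{x}\geq\bm{A}\bm{x}\oplus\bm{b}$ gives $\bm{x}\geq\bm{A}^{k}\bm{x}\oplus(\bm{A}^{k-1}\oplus\cdots\oplus\bm{I})\bm{b}$ for every $k\geq1$; in particular taking $k=n$ yields $\bm{x}\geq\bm{A}^{\ast}\bm{b}$, and more importantly $\bm{x}\geq\bm{A}^{k}\bm{b}$ for all $k$. Setting $\bm{u}=\bm{x}$, we clearly have $\bm{u}\geq\bm{b}$ and $\bm{u}$ regular, and it remains to check $\bm{x}=\bm{A}^{\ast}\bm{u}$: the inequality $\bm{A}^{\ast}\bm{u}=\bm{A}^{\ast}\bm{x}\geq\bm{x}$ is immediate from $\bm{A}^{\ast}\geq\bm{I}$, while $\bm{A}^{\ast}\bm{x}\leq\bm{x}$ follows by iterating $\bm{A}\bm{x}\leq\bm{x}$ to obtain $\bm{A}^{k}\bm{x}\leq\bm{x}$ for each $k$ and then summing over $k=0,\dots,n-1$. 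Hence $\bm{x}=\bm{A}^{\ast}\bm{u}$ as required.

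For the second statement, assume $\mathop\mathrm{Tr}(\bm{A})>\mathbb{1}$ and suppose, for contradiction, that a regular solution $\bm{x}$ exists. From $\bm{A}\bm{x}\leq\bm{x}$ one gets $\bm{A}^{k}\bm{x}\leq\bm{x}$ for all $k\geq1$, hence $x_{i}^{-1}\{\bm{A}^{k}\}_{ij}x_{j}\leq\mathbb{1}$ entrywise, and summing the diagonal terms over $i$ and then over $k=1,\dots,n$ gives $\mathop\mathrm{tr}(\bm{D}^{-1}\bm{A}\bm{D})\oplus\cdots\oplus\mathop\mathrm{tr}(\bm{D}^{-1}\bm{A}^{n}\bm{D})\leq\mathbb{1}$ where $\bm{D}=\mathop\mathrm{diag}(\bm{x})$; since conjugation by a diagonal matrix leaves each $\mathop\mathrm{tr}\bm{A}^{k}$ unchanged, this says $\mathop\mathrm{Tr}(\bm{A})\leq\mathbb{1}$, contradicting the hypothesis. (Alternatively, one may phrase this via the loop/cycle weights of the matrix $\bm{A}$ scaled by $\bm{x}$, but the diagonal-similarity argument is cleanest.)

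The main obstacle I anticipate is the necessity part of statement~(1): one must be careful that the vector $\bm{u}=\bm{x}$ produced is genuinely regular (guaranteed here since $\bm{x}$ was assumed regular) and that $\bm{A}^{\ast}\bm{x}=\bm{x}$ holds exactly — the inequality $\bm{A}^{\ast}\bm{x}\geq\bm{x}$ is trivial, but $\bm{A}^{\ast}\bm{x}\leq\bm{x}$ requires the iterated bound $\bm{A}^{k}\bm{x}\leq\bm{x}$, which in turn quietly uses nothing more than monotonicity of multiplication. A secondary subtlety is making sure the representation is not claimed to be a bijective parametrization: different $\bm{u}$ may yield the same $\bm{x}$, so the statement should be read as "the solution set equals $\{\bm{A}^{\ast}\bm{u}: \bm{u}\ \text{regular},\ \bm{u}\geq\bm{b}\}$," which is exactly what the two inclusions above establish.
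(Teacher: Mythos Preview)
The paper does not actually prove Theorem~\ref{T-Axbx}; it merely states the result and cites \cite{Krivulin2006Solution,Krivulin2009Onsolution,Krivulin2013Amultidimensional} for the proof. Your self-contained argument is correct: the sufficiency direction uses $\bm{A}\bm{A}^{\ast}\leq\bm{A}^{\ast}$ and $\bm{b}\leq\bm{u}\leq\bm{A}^{\ast}\bm{u}$ exactly as needed, the necessity direction with the choice $\bm{u}=\bm{x}$ and the verification $\bm{A}^{\ast}\bm{x}=\bm{x}$ via the iterated bound $\bm{A}^{k}\bm{x}\leq\bm{x}$ is the standard and clean way to close the equality, and the nonexistence argument in part~(2) is fine (indeed, once you observe that commutativity of the semifield gives $x_{i}^{-1}\{\bm{A}^{k}\}_{ii}x_{i}=\{\bm{A}^{k}\}_{ii}$ directly, the diagonal-similarity wrapper is unnecessary, but it does no harm). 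Your remark that the parametrization $\bm{u}\mapsto\bm{A}^{\ast}\bm{u}$ need not be injective is also apt and matches how the statement should be read.
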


\section{Optimization Problems}
\label{S-OP}

This section is concerned with deriving complete direct solutions to multidimensional constrained optimization problems. The problems consist in minimizing a nonlinear objective function subject to both linear inequality constraints with a matrix and simple boundary constraints. We start with a short overview of the previous results, which provide solutions to problems with reduced sets of constraints. Furthermore, a complete solution to a general problem that involves both constraints is obtained under fairly general assumptions. Two special cases of the solution are discussed which improve the previous results. Finally, we present illustrative examples of two-dimensional optimization problems.

\subsection{Previous Results}

We start with an unconstrained problem that is examined in \cite{Krivulin2011Analgebraic} by applying extremal properties of tropical eigenvalues. Given vectors $\bm{p},\bm{q}\in\mathbb{X}^{n}$, the problem is to find regular vectors $\bm{x}\in\mathbb{X}^{n}$ that
\begin{equation}
\begin{aligned}
&
\text{minimize}
&&
\bm{x}^{-}\bm{p}\oplus\bm{q}^{-}\bm{x}.
\end{aligned}
\label{P-xpqx}
\end{equation}

The problem is reduced to the solving of the eigenvalue-eigenvector problem for a certain matrix. The solution is given by the next statement.

\begin{lemma}\label{L-xpqx}
Let $\bm{p}$ and $\bm{q}$ be regular vectors, and
$$
\theta
=
(\bm{q}^{-}\bm{p})^{1/2}.
$$

Then, the minimum value in problem \eqref{P-xpqx} is equal to $\theta$ and attained at each vector $\bm{x}$ such that
$$
\theta^{-1}\bm{p}
\leq
\bm{x}
\leq
\theta\bm{q}.
$$
\end{lemma}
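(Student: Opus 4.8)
The plan is to prove the two bounds — that $\theta$ is a lower bound for the objective and that it is actually attained on the stated interval — and to glue these together.

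\medskip

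First, I would establish the lower bound. For any regular $\bm{x}$, I want to show $\bm{x}^{-}\bm{p}\oplus\bm{q}^{-}\bm{x}\geq\theta$. The natural device is the elementary inequality $a\oplus b\geq(ab)^{1/2}$, which holds in a radicable idempotent semifield since $a\oplus b\geq a$ and $a\oplus b\geq b$ imply $(a\oplus b)^{2}\geq ab$. Applying this with $a=\bm{x}^{-}\bm{p}$ and $b=\bm{q}^{-}\bm{x}$ gives
$$
\bm{x}^{-}\bm{p}\oplus\bm{q}^{-}\bm{x}
\geq
(\bm{x}^{-}\bm{p}\,\bm{q}^{-}\bm{x})^{1/2}.
$$
Now $\bm{x}^{-}\bm{p}$ and $\bm{q}^{-}\bm{x}$ are scalars, so they commute, and I need $\bm{x}^{-}\bm{p}\,\bm{q}^{-}\bm{x}\geq\bm{q}^{-}\bm{p}$. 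This follows from the matrix inequality $\bm{p}\bm{q}^{-}\geq(\bm{q}^{-}\bm{p})^{-1}\bm{I}$ stated in the preliminaries: multiplying on the left by $\bm{x}^{-}$ and on the right by $\bm{x}$, and using $\bm{x}^{-}\bm{x}=\mathbb{1}$, yields $\bm{x}^{-}\bm{p}\bm{q}^{-}\bm{x}\geq(\bm{q}^{-}\bm{p})^{-1}\bm{x}^{-}\bm{x}=(\bm{q}^{-}\bm{p})^{-1}$ — wait, that gives the wrong direction, so instead I would note directly that $(\bm{x}^{-}\bm{p})(\bm{q}^{-}\bm{x})\geq(\bm{p}^{-}\bm{x})^{-1}(\bm{q}^{-}\bm{x})\geq\ldots$; more cleanly, from $\bm{p}\bm{q}^{-}\geq(\bm{q}^{-}\bm{p})^{-1}\bm{I}$ I get $\bm{x}^{-}(\bm{p}\bm{q}^{-})\bm{x}\geq(\bm{q}^{-}\bm{p})^{-1}\bm{x}^{-}\bm{x}$, which is the reciprocal of what I want, so the correct companion inequality to invoke is $\bm{q}^{-}\bm{x}\,\bm{x}^{-}\bm{p}\geq\bm{q}^{-}\bm{p}$, which comes from $\bm{x}\bm{x}^{-}\geq\bm{I}$ (the $\bm{y}=\bm{x}$ case in the preliminaries). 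Thus $\bm{x}^{-}\bm{p}\oplus\bm{q}^{-}\bm{x}\geq(\bm{q}^{-}\bm{p})^{1/2}=\theta$.

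\medskip

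Second, I would show that every $\bm{x}$ with $\theta^{-1}\bm{p}\leq\bm{x}\leq\theta\bm{q}$ is regular and attains the value $\theta$. Regularity of such $\bm{x}$ follows because $\theta^{-1}\bm{p}$ is a regular vector (as $\bm{p}$ is regular and $\theta\neq\mathbb{0}$) and $\bm{x}\geq\theta^{-1}\bm{p}$ forces every component of $\bm{x}$ to dominate a nonzero quantity. From $\bm{x}\geq\theta^{-1}\bm{p}$ and antitonicity of conjugate transposition, $\bm{x}^{-}\leq\theta\bm{p}^{-}$, hence $\bm{x}^{-}\bm{p}\leq\theta\bm{p}^{-}\bm{p}=\theta$. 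Similarly $\bm{x}\leq\theta\bm{q}$ gives $\bm{q}^{-}\bm{x}\leq\theta\bm{q}^{-}\bm{q}=\theta$, so $\bm{x}^{-}\bm{p}\oplus\bm{q}^{-}\bm{x}\leq\theta$. Combined with the lower bound, the objective equals $\theta$ there. It remains to confirm the interval is nonempty, i.e.\ $\theta^{-1}\bm{p}\leq\theta\bm{q}$, equivalently $\bm{p}\leq\theta^{2}\bm{q}=(\bm{q}^{-}\bm{p})\bm{q}$; this holds since $\bm{q}(\bm{q}^{-}\bm{p})\geq\bm{p}$, which is again an instance of $\bm{q}\bm{q}^{-}\geq\bm{I}$.

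\medskip

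The only real subtlety is getting the two auxiliary scalar inequalities — $\bm{x}^{-}\bm{p}\oplus\bm{q}^{-}\bm{x}\geq\theta$ for the lower bound and nonemptiness of the interval — pointed in the right direction; both reduce to the single fact $\bm{x}\bm{x}^{-}\geq\bm{I}$ (and its variant $\bm{q}\bm{q}^{-}\geq\bm{I}$) from the preliminaries, so there is no genuine obstacle, just bookkeeping. Everything else is a routine application of the antitone property of conjugate transposition and the identity $\bm{x}^{-}\bm{x}=\mathbb{1}$.
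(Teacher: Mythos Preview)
Your argument is correct. The lower bound follows cleanly once you use $\bm{q}^{-}\bm{x}\,\bm{x}^{-}\bm{p}\geq\bm{q}^{-}\bm{p}$ via $\bm{x}\bm{x}^{-}\geq\bm{I}$, and the attainment and nonemptiness parts are routine applications of $\bm{x}^{-}\bm{x}=\mathbb{1}$ and antitonicity of conjugate transposition, just as you say. (You might tidy the exposition of the lower bound by deleting the false start with $\bm{p}\bm{q}^{-}\geq(\bm{q}^{-}\bm{p})^{-1}\bm{I}$ and going straight to the $\bm{x}\bm{x}^{-}\geq\bm{I}$ route.)

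As for comparison with the paper: the paper does not actually prove Lemma~\ref{L-xpqx}. It is quoted as a known result, originally obtained by reducing the problem to an eigenvalue--eigenvector problem for an associated matrix, with completeness established elsewhere via sharp bounds. Within the present paper the lemma is also recoverable as the degenerate case ($\bm{B}=\mathbb{0}$, no boundary constraints) of Theorem~\ref{T-xpqxBxxgxh}, whose proof proceeds by introducing a parameter $\theta$, rewriting the level set $\bm{x}^{-}\bm{p}\oplus\bm{q}^{-}\bm{x}\leq\theta$ as a system of linear inequalities via Lemma~\ref{L-Axd}, and then solving for the smallest $\theta$ making the system consistent. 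Your approach is genuinely different and more elementary: the single inequality $a\oplus b\geq(ab)^{1/2}$ combined with $\bm{x}\bm{x}^{-}\geq\bm{I}$ gives the lower bound in one stroke, with no need for either spectral theory or the parametrized-inequality machinery. The trade-off is that your method is tailored to this specific objective and does not obviously extend to the constrained variants, whereas the paper's parametrized approach handles the general problem uniformly.
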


A different approach based on the solutions to linear inequalities is used in \cite{Krivulin2012Anew,Krivulin2013Direct} to show that the above solution of problem \eqref{P-xpqx} is complete. Moreover, the approach is applied to solve constrained versions of the problem. Specifically, the following problem is considered: given a matrix $\bm{B}\in\mathbb{X}^{n\times n}$, find regular vectors $\bm{x}$ that
\begin{equation}
\begin{aligned}
&
\text{minimize}
&&
\bm{x}^{-}\bm{p}\oplus\bm{q}^{-}\bm{x},
\\
&
\text{subject to}
&&
\bm{B}\bm{x}
\leq
\bm{x}.
\end{aligned}
\label{P-xpqxBxx}
\end{equation}

The solution, which is given in \cite{Krivulin2012Anew} under some restrictive assumptions on the matrix $\bm{B}$, can readily be extended to arbitrary matrices by using the result of Theorem~\ref{T-Axbx}, and then written in the following form.
\begin{theorem}
Let $\bm{B}$ be a matrix with $\mathop\mathrm{Tr}(\bm{B})\leq\mathbb{1}$, $\bm{p}$ and $\bm{q}$ regular vectors, and
\begin{equation}
\theta
=
((\bm{B}^{\ast}(\bm{q}^{-}\bm{B}^{\ast})^{-})^{-}\bm{p})^{1/2}.
\label{E-theta0}
\end{equation}

Then, the minimum value in problem \eqref{P-xpqxBxx} is equal to $\theta$ and attained at
$$
\bm{x}
=
\theta\bm{B}^{\ast}(\bm{q}^{-}\bm{B}^{\ast})^{-}.
$$
\end{theorem}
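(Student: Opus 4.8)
The plan is to adapt the two-sided bounding argument that underlies Lemma~\ref{L-xpqx} to the constrained setting, using Theorem~\ref{T-Axbx} to parametrize the feasible set. First I would introduce an auxiliary scalar $\mu$ as a guessed upper bound for the objective, so that a regular vector $\bm{x}$ achieves $\bm{x}^{-}\bm{p}\oplus\bm{q}^{-}\bm{x}\leq\mu$ precisely when the two simultaneous inequalities $\bm{x}^{-}\bm{p}\leq\mu$ and $\bm{q}^{-}\bm{x}\leq\mu$ hold. The second of these, $\bm{q}^{-}\bm{x}\leq\mu$, is equivalent to $\mu^{-1}\bm{q}^{-}\bm{x}\leq\mathbb{1}$; combining it with the constraint $\bm{B}\bm{x}\leq\bm{x}$ and noting that $\bm{q}^{-}\bm{x}$ is a scalar, I can rewrite everything as a single inequality of the form $\bm{A}\bm{x}\oplus\bm{b}\leq\bm{x}$ — concretely, the constraint $\bm{B}\bm{x}\oplus\mathbb{0}\leq\bm{x}$ together with the knowledge that we only need $\bm{x}$ ranging over $\bm{B}^{\ast}\bm{u}$ for regular $\bm{u}$. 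Applying Theorem~\ref{T-Axbx} (legitimate since $\mathop\mathrm{Tr}(\bm{B})\leq\mathbb{1}$), every feasible regular $\bm{x}$ has the form $\bm{x}=\bm{B}^{\ast}\bm{u}$; substituting this into the two scalar inequalities turns the problem into one over the free regular vector $\bm{u}$.

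Next I would handle the inequality $\bm{x}^{-}\bm{p}\leq\mu$, i.e. $(\bm{B}^{\ast}\bm{u})^{-}\bm{p}\leq\mu$, which by the conjugate-transposition identities is $\bm{u}^{-}(\bm{B}^{\ast})\ldots$ — more carefully, $(\bm{B}^{\ast}\bm{u})^{-}=\bm{u}^{-}(\bm{B}^{\ast})^{\star}$-type manipulations are delicate, so instead I would use the antitone property directly: $(\bm{B}^{\ast}\bm{u})^{-}\bm{p}\leq\mu$ together with $\bm{q}^{-}\bm{B}^{\ast}\bm{u}\leq\mu$. The second gives $\bm{u}\leq\mu(\bm{q}^{-}\bm{B}^{\ast})^{-}$ by Lemma~\ref{L-Axd} applied to the row vector $\bm{q}^{-}\bm{B}^{\ast}$ (which is regular because $\bm{B}^{\ast}\geq\bm{I}$ is column-regular and $\bm{q}$ is regular). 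Substituting this largest admissible $\bm{u}$, namely $\bm{u}=\mu(\bm{q}^{-}\bm{B}^{\ast})^{-}$, into the first scalar inequality and using monotonicity, the condition $\bm{x}^{-}\bm{p}\leq\mu$ becomes $\mu^{-1}(\bm{B}^{\ast}(\bm{q}^{-}\bm{B}^{\ast})^{-})^{-}\bm{p}\leq\mu$, i.e. $(\bm{B}^{\ast}(\bm{q}^{-}\bm{B}^{\ast})^{-})^{-}\bm{p}\leq\mu^{2}$. This shows the objective can be made $\leq\mu$ exactly when $\mu\geq\theta$ with $\theta$ as in \eqref{E-theta0}, so the minimum equals $\theta$ and is attained at $\bm{u}=\theta(\bm{q}^{-}\bm{B}^{\ast})^{-}$, hence at $\bm{x}=\theta\bm{B}^{\ast}(\bm{q}^{-}\bm{B}^{\ast})^{-}$.

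I would close by verifying the two sides of the claimed equality explicitly: that this $\bm{x}$ is feasible (immediate since $\bm{B}\bm{B}^{\ast}=\bm{B}^{\ast}\bm{B}\leq\bm{B}^{\ast}$ by Carr\'e's property, so $\bm{B}\bm{x}\leq\bm{x}$), and that it achieves objective value exactly $\theta$ rather than merely $\leq\theta$ — for the lower bound I would show $\bm{x}^{-}\bm{p}\oplus\bm{q}^{-}\bm{x}\geq(\bm{x}^{-}\bm{p})^{1/2}(\bm{q}^{-}\bm{x})^{1/2}\geq\theta$ using the extremal property $a\oplus b\geq (ab)^{1/2}$ together with the inequality $\bm{x}\bm{x}^{-}\geq\bm{I}$ to bound $(\bm{x}^{-}\bm{p})(\bm{q}^{-}\bm{x})\geq\bm{q}^{-}\bm{x}\bm{x}^{-}\bm{p}\geq\bm{q}^{-}\bm{p}$, and then refine this to the $\bm{B}^{\ast}$-weighted version. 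The main obstacle I anticipate is precisely this lower-bound step: showing that no feasible $\bm{x}$ beats $\theta$ requires carefully propagating the constraint $\bm{B}\bm{x}\leq\bm{x}$ (equivalently $\bm{x}=\bm{B}^{\ast}\bm{x}$, which holds for feasible $\bm{x}$ when $\mathop\mathrm{Tr}(\bm{B})\leq\mathbb{1}$) into the product bound, so that $\bm{q}^{-}\bm{p}$ gets replaced by the sharper quantity $(\bm{B}^{\ast}(\bm{q}^{-}\bm{B}^{\ast})^{-})^{-}\bm{p}$; handling the conjugate transpose of the product $\bm{B}^{\ast}\bm{u}$ correctly, and making sure all regularity hypotheses needed for Lemma~\ref{L-Axd} and for the transposition identities are in force, is where the care is needed.
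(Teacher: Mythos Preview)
Your core argument is correct and follows essentially the same idea the paper uses: introduce a parameter $\mu$, rewrite $\bm{x}^{-}\bm{p}\oplus\bm{q}^{-}\bm{x}\leq\mu$ as the pair $\theta^{-1}\bm{p}\leq\bm{x}$ and $\bm{x}\leq\mu\bm{q}$, parametrize the feasible set as $\bm{x}=\bm{B}^{\ast}\bm{u}$ via Theorem~\ref{T-Axbx}, and read off the minimum as the smallest $\mu$ for which the resulting system of bounds on $\bm{u}$ is consistent. Your monotonicity step --- taking $\bm{u}$ at its upper bound $\mu(\bm{q}^{-}\bm{B}^{\ast})^{-}$ minimizes $(\bm{B}^{\ast}\bm{u})^{-}\bm{p}$, so the system is solvable iff $\mu^{2}\geq(\bm{B}^{\ast}(\bm{q}^{-}\bm{B}^{\ast})^{-})^{-}\bm{p}$ --- is sound and already gives both the upper and the lower bound at once.

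Two remarks. First, the paper does not prove this theorem directly; it is quoted as a prior result, and the paper instead proves the more general Theorem~\ref{T-xpqxBxxgxh}, specializes it to Corollary~\ref{C-xpqxBxx} with the simpler expression $\theta=(\bm{q}^{-}\bm{B}^{\ast}\bm{p})^{1/2}$, and only afterwards verifies that this coincides with \eqref{E-theta0} via the identity $\bm{B}^{\ast}(\bm{q}^{-}\bm{B}^{\ast})^{-}=(\bm{q}^{-}\bm{B}^{\ast})^{-}$. Your route reaches the formula \eqref{E-theta0} directly, which is fine but slightly less transparent; you may find it cleaner to derive $(\bm{q}^{-}\bm{B}^{\ast}\bm{p})^{1/2}$ first (this is what the consistency condition $\theta^{-1}\bm{p}\leq\bm{u}\leq\theta(\bm{q}^{-}\bm{B}^{\ast})^{-}$ gives immediately, without pushing $\bm{u}$ through $\bm{B}^{\ast}$ again) and then check the equivalence.

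Second, your final paragraph is unnecessary and a little confused. Once you have shown that the objective can be made $\leq\mu$ over the feasible set \emph{exactly} when $\mu\geq\theta$, the lower bound is already established: there is nothing further to prove via $a\oplus b\geq(ab)^{1/2}$ or $\bm{x}\bm{x}^{-}\geq\bm{I}$. The ``obstacle'' you anticipate does not arise, because the solvability argument handles both directions simultaneously. Drop that paragraph and your proof is complete.
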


Note that the theorem offers a particular solution to the problem rather than provides a complete solution.

Furthermore, given vectors $\bm{g},\bm{h}\in\mathbb{X}^{n}$, consider a problem with two-sided boundary constraints to find regular vectors $\bm{x}$ that
\begin{equation}
\begin{aligned}
&
\text{minimize}
&&
\bm{x}^{-}\bm{p}\oplus\bm{q}^{-}\bm{x},
\\
&
\text{subject to}
&&
\bm{g}
\leq
\bm{x}
\leq
\bm{h}.
\end{aligned}
\label{P-xpqxgxh}
\end{equation}

The complete solution obtained in \cite{Krivulin2013Direct} is as follows.
\begin{theorem}
\label{T-xpqxgxh}
Let $\bm{p}$, $\bm{q}$, $\bm{g}$, and $\bm{h}$ be regular vectors such that $\bm{g}\leq\bm{h}$, and
\begin{equation*}
\theta
=
(\bm{q}^{-}\bm{p})^{1/2}\oplus\bm{h}^{-}\bm{p}\oplus\bm{q}^{-}\bm{g}.
\label{E-theta1}
\end{equation*}

Then, the minimum in problem \eqref{P-xpqxgxh} is equal to $\theta$ and all regular solutions of the problem are given by the condition
$$
\bm{g}\oplus\theta^{-1}\bm{p}
\leq
\bm{x}
\leq
(\bm{h}^{-}\oplus\theta^{-1}\bm{q}^{-})^{-}.
$$
\end{theorem}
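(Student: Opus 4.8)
The plan is to use the parametrization technique from \cite{Krivulin2013Amultidimensional,Krivulin2013Direct}: the minimum of the objective $f(\bm{x})=\bm{x}^{-}\bm{p}\oplus\bm{q}^{-}\bm{x}$ over the feasible set $\{\bm{x}:\bm{g}\leq\bm{x}\leq\bm{h}\}$ equals the least scalar $\mu$ for which the parametrized system
\[
\bm{x}^{-}\bm{p}\oplus\bm{q}^{-}\bm{x}\leq\mu,
\qquad
\bm{g}\leq\bm{x}\leq\bm{h}
\]
has a regular solution $\bm{x}$, and the set of minimizers of \eqref{P-xpqxgxh} coincides with the solution set of this system when $\mu$ is taken equal to that least value. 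So the first step is to replace the optimization problem by a consistency analysis of this system in the unknown $\bm{x}$ with $\mu$ as a parameter.

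Next I would split $f(\bm{x})\leq\mu$ into the pair $\bm{x}^{-}\bm{p}\leq\mu$ and $\bm{q}^{-}\bm{x}\leq\mu$, using that an idempotent sum is bounded by $\mu$ iff each summand is. For the first inequality, multiplying on the left by $\bm{x}$ and using $\bm{x}\bm{x}^{-}\geq\bm{I}$ gives $\bm{p}\leq\mu\bm{x}$, hence $\bm{x}\geq\mu^{-1}\bm{p}$; the converse follows by conjugate transposition together with $\bm{p}^{-}\bm{p}=\mathbb{1}$. The second inequality has the form \eqref{I-Axd} with the column-regular matrix $\bm{q}^{-}$, so Lemma~\ref{L-Axd} yields $\bm{x}\leq\mu\bm{q}$, and the converse is immediate. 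Adjoining the boundary constraints, the system reduces to the two-sided bound $\bm{g}\oplus\mu^{-1}\bm{p}\leq\bm{x}\leq(\bm{h}^{-}\oplus\mu^{-1}\bm{q}^{-})^{-}$, where the two upper bounds $\bm{x}\leq\bm{h}$ and $\bm{x}\leq\mu\bm{q}$ have been merged by passing to conjugate transposes and back.

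It then remains to decide for which $\mu$ this interval is nonempty, i.e. when $\bm{g}\oplus\mu^{-1}\bm{p}\leq(\bm{h}^{-}\oplus\mu^{-1}\bm{q}^{-})^{-}$. Writing this componentwise as $(g_{i}\oplus\mu^{-1}p_{i})(h_{i}^{-1}\oplus\mu^{-1}q_{i}^{-1})\leq\mathbb{1}$ and expanding by distributivity produces four terms; the term $g_{i}h_{i}^{-1}$ is already $\leq\mathbb{1}$ because $\bm{g}\leq\bm{h}$, and the remaining three give $\mu\geq q_{i}^{-1}g_{i}$, $\mu\geq p_{i}h_{i}^{-1}$, and $\mu^{2}\geq p_{i}q_{i}^{-1}$. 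Aggregating over $i$ shows the interval is nonempty exactly when $\mu\geq\theta$ with $\theta=(\bm{q}^{-}\bm{p})^{1/2}\oplus\bm{h}^{-}\bm{p}\oplus\bm{q}^{-}\bm{g}$. Consequently the minimum of the objective equals $\theta$, it is attained since the interval at $\mu=\theta$ is nonempty by construction, and the full set of regular solutions is recovered by setting $\mu=\theta$ in the two-sided bound, namely $\bm{g}\oplus\theta^{-1}\bm{p}\leq\bm{x}\leq(\bm{h}^{-}\oplus\theta^{-1}\bm{q}^{-})^{-}$.

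The step most in need of care is the last one: correctly expanding the product inequality, discarding exactly the term controlled by $\bm{g}\leq\bm{h}$, and checking that the resulting scalar threshold is precisely $\theta$ and that the optimal interval is genuinely nonempty, so that the extremum is a minimum rather than merely an infimum. One must also verify that no solutions are gained or lost along the reduction — in particular that the passages $\bm{x}^{-}\bm{p}\leq\mu\iff\bm{x}\geq\mu^{-1}\bm{p}$ and $\bm{q}^{-}\bm{x}\leq\mu\iff\bm{x}\leq\mu\bm{q}$ are genuine equivalences under the regularity hypotheses, and that merging the two upper bounds via conjugate transposition is reversible.
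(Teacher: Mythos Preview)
Your proposal is correct and follows essentially the same parametrization-and-inequality-splitting technique that the paper uses. Note, however, that the paper does not give a standalone proof of Theorem~\ref{T-xpqxgxh}: it is quoted from \cite{Krivulin2013Direct}, and in Section~4.3 the paper recovers it as the special case $\bm{B}=\mathbb{0}$ (so $\bm{B}^{\ast}=\bm{I}$) of Theorem~\ref{T-xpqxBxxgxh}. Your argument is exactly the $\bm{B}=\mathbb{0}$ instance of the proof of Theorem~\ref{T-xpqxBxxgxh}: introduce the level parameter, split $\bm{x}^{-}\bm{p}\oplus\bm{q}^{-}\bm{x}\leq\mu$ into two one-sided inequalities, invoke Lemma~\ref{L-Axd}, merge the resulting bounds with $\bm{g}\leq\bm{x}\leq\bm{h}$ into a two-sided interval, and then characterize the $\mu$ for which that interval is nonempty. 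The only cosmetic difference is that in the nonemptiness step the paper works at the vector level---left-multiplying the inequality $\bm{g}\oplus\mu^{-1}\bm{p}\leq((\bm{h}^{-}\oplus\mu^{-1}\bm{q}^{-})\bm{B}^{\ast})^{-}$ by $(\bm{h}^{-}\oplus\mu^{-1}\bm{q}^{-})\bm{B}^{\ast}$ and using $\bm{y}^{-}\bm{y}=\mathbb{1}$ to get a single scalar inequality, which then unpacks into the three conditions on $\mu$---whereas you expand componentwise from the outset; the two routes are equivalent and yield the same threshold $\theta$.
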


Below, we examine a new general problem, which combines the constraints in problems \eqref{P-xpqxBxx} and \eqref{P-xpqxgxh}, and includes both these problems as special cases.

\subsection{New Optimization Problem with Combined Constraints}

We now are in a position to formulate and solve a new constrained optimization problem. The solution follows the approach developed in \cite{Krivulin2013Amultidimensional,Krivulin2014Aconstrained}, which is based on the introduction of an auxiliary variable and the reduction of the problem to the solution of a parametrized system of linear inequalities, where the new variable plays the role of a parameter. The existence condition for the solution of the system is used to evaluate the parameter, whereas the complete solution to the system is taken as the solution to the optimization problem.

Given vectors $\bm{p},\bm{q},\bm{g},\bm{h}\in\mathbb{X}^{n}$, and a matrix $\bm{B}\in\mathbb{X}^{n\times n}$, consider the problem to find all regular vectors $\bm{x}\in\mathbb{X}^{n}$ that
\begin{equation}
\begin{aligned}
&
\text{minimize}
&&
\bm{x}^{-}\bm{p}\oplus\bm{q}^{-}\bm{x},
\\
&
\text{subject to}
&&
\bm{B}\bm{x}\oplus\bm{g}
\leq
\bm{x},
\\
&
&&
\bm{x}
\leq
\bm{h}.
\end{aligned}
\label{P-xpqxBxxgxh}
\end{equation}

The constraints in the problem can also be written in the equivalent form
\begin{align*}
&\bm{B}\bm{x}
\leq
\bm{x},
\\
&
\bm{g}
\leq
\bm{x}
\leq
\bm{h}.
\end{align*}

The next statement gives a complete direct solution to the problem.
\begin{theorem}
\label{T-xpqxBxxgxh}
Let $\bm{B}$ be a matrix with $\mathop\mathrm{Tr}(\bm{B})\leq\mathbb{1}$, $\bm{p}$ be a non-zero vector, $\bm{q}$ and $\bm{h}$ regular vectors, and $\bm{g}$ a vector such that $\bm{h}^{-}\bm{B}^{\ast}\bm{g}\leq\mathbb{1}$. Define a scalar
\begin{equation}
\theta
=
(\bm{q}^{-}\bm{B}^{\ast}\bm{p})^{1/2}
\oplus
\bm{h}^{-}\bm{B}^{\ast}\bm{p}\oplus\bm{q}^{-}\bm{B}^{\ast}\bm{g}.
\label{E-theta}
\end{equation}

Then, the minimum value in problem \eqref{P-xpqxBxxgxh} is equal to $\theta$ and all regular solutions of the problem are given by
$$
\bm{x}
=
\bm{B}^{\ast}\bm{u},
$$
where $\bm{u}$ is any regular vector such that
\begin{equation}
\bm{g}\oplus\theta^{-1}\bm{p}
\leq
\bm{u}
\leq
((\bm{h}^{-}\oplus\theta^{-1}\bm{q}^{-})\bm{B}^{\ast})^{-}.
\label{E-xBu}
\end{equation}
\end{theorem}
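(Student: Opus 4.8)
The plan is to follow the parametric reduction sketched before the statement. I treat an arbitrary admissible value $\theta$ of the objective function as a parameter: I impose $\bm{x}^{-}\bm{p}\oplus\bm{q}^{-}\bm{x}\leq\theta$, combine this with the two constraints into one parametrized system of linear inequalities in $\bm{x}$, solve the system completely by means of Lemma~\ref{L-Axd} and Theorem~\ref{T-Axbx}, and then read off from the existence condition of the solution set the smallest $\theta$ for which the system is consistent. That smallest $\theta$ will be the minimum in~\eqref{P-xpqxBxxgxh}, and the solution set of the system for this $\theta$ will be the solution set of the problem.

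For the reduction, I first note that for regular $\bm{x}$ the scalar inequality $\bm{x}^{-}\bm{p}\oplus\bm{q}^{-}\bm{x}\leq\theta$ splits into $\bm{x}^{-}\bm{p}\leq\theta$ and $\bm{q}^{-}\bm{x}\leq\theta$; since $\bm{p}\neq\mathbb{0}$ forces $\theta>\mathbb{0}$, the first becomes $\theta^{-1}\bm{p}\leq\bm{x}$, and by Lemma~\ref{L-Axd} the second becomes $\bm{x}\leq(\theta^{-1}\bm{q}^{-})^{-}=\theta\bm{q}$. Together with $\bm{B}\bm{x}\oplus\bm{g}\leq\bm{x}$, $\bm{x}\leq\bm{h}$, and the fact that $\bm{x}\leq\bm{h}$ and $\bm{x}\leq\theta\bm{q}$ together amount to $\bm{x}\leq(\bm{h}^{-}\oplus\theta^{-1}\bm{q}^{-})^{-}$, the whole condition is
\[
\bm{B}\bm{x}\oplus\bm{g}\oplus\theta^{-1}\bm{p}
\leq
\bm{x}
\leq
(\bm{h}^{-}\oplus\theta^{-1}\bm{q}^{-})^{-}.
\]
Because $\mathop\mathrm{Tr}(\bm{B})\leq\mathbb{1}$, Theorem~\ref{T-Axbx} gives all regular solutions of the left inequality as $\bm{x}=\bm{B}^{\ast}\bm{u}$ with $\bm{u}$ regular and $\bm{u}\geq\bm{g}\oplus\theta^{-1}\bm{p}$; substituting into the right inequality $\bm{B}^{\ast}\bm{u}\leq(\bm{h}^{-}\oplus\theta^{-1}\bm{q}^{-})^{-}$ and applying Lemma~\ref{L-Axd} once more (using that $\bm{B}^{\ast}$ is column-regular since $\bm{B}^{\ast}\geq\bm{I}$, and that $((\bm{h}^{-}\oplus\theta^{-1}\bm{q}^{-})^{-})^{-}=\bm{h}^{-}\oplus\theta^{-1}\bm{q}^{-}$ for this regular row vector) yields precisely the upper bound $\bm{u}\leq((\bm{h}^{-}\oplus\theta^{-1}\bm{q}^{-})\bm{B}^{\ast})^{-}$ of~\eqref{E-xBu}.

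It then remains to make the box on $\bm{u}$ non-empty, i.e.\ $\bm{g}\oplus\theta^{-1}\bm{p}\leq((\bm{h}^{-}\oplus\theta^{-1}\bm{q}^{-})\bm{B}^{\ast})^{-}$, which via $\bm{a}\leq\bm{w}^{-}\Leftrightarrow\bm{w}\bm{a}\leq\mathbb{1}$ is the scalar inequality $(\bm{h}^{-}\oplus\theta^{-1}\bm{q}^{-})\bm{B}^{\ast}(\bm{g}\oplus\theta^{-1}\bm{p})\leq\mathbb{1}$; expanding and splitting the sum of four terms produces $\bm{h}^{-}\bm{B}^{\ast}\bm{g}\leq\mathbb{1}$ (true by hypothesis) together with $\theta\geq\bm{h}^{-}\bm{B}^{\ast}\bm{p}$, $\theta\geq\bm{q}^{-}\bm{B}^{\ast}\bm{g}$, and $\theta\geq(\bm{q}^{-}\bm{B}^{\ast}\bm{p})^{1/2}$, whose least solution is exactly~\eqref{E-theta}. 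Finally I assemble the two directions: any feasible regular $\bm{x}$ with objective value $\mu$ makes the parametrized system with parameter $\mu$ solvable, so $\mu\geq\theta$; and for $\theta$ as in~\eqref{E-theta} every $\bm{u}$ satisfying~\eqref{E-xBu} gives a regular $\bm{x}=\bm{B}^{\ast}\bm{u}$ (as $\bm{B}^{\ast}$ is row-regular) that satisfies both constraints and has objective value $\leq\theta$, hence $=\theta$. I expect the main obstacle to be the bookkeeping around the two nested applications of Lemma~\ref{L-Axd} and the $(\cdot^{-})^{-}$ simplifications needed to land the bound in~\eqref{E-xBu} in the stated closed form, and to confirm that the hypothesis $\bm{h}^{-}\bm{B}^{\ast}\bm{g}\leq\mathbb{1}$ — and nothing stronger — is precisely the consistency requirement that survives once $\theta$ is chosen optimally.
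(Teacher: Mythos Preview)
Your proposal is correct and follows essentially the same route as the paper's own proof: reduce the level set $\bm{x}^{-}\bm{p}\oplus\bm{q}^{-}\bm{x}\leq\theta$ together with the constraints to the double inequality $\bm{B}\bm{x}\oplus\bm{g}\oplus\theta^{-1}\bm{p}\leq\bm{x}\leq(\bm{h}^{-}\oplus\theta^{-1}\bm{q}^{-})^{-}$, solve the left part by Theorem~\ref{T-Axbx} and the right part by Lemma~\ref{L-Axd}, and then read off~\eqref{E-theta} from the non-emptiness of the resulting box on $\bm{u}$. Your write-up is in fact slightly tidier than the paper's in two places: you invoke the equivalence $\bm{a}\leq\bm{w}^{-}\Leftrightarrow\bm{w}\bm{a}\leq\mathbb{1}$ directly (the paper verifies both implications separately via the properties of conjugate transposition), and you make the two-direction argument at the end explicit, whereas the paper leaves the ``attained'' direction implicit.
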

\begin{proof}
Suppose that $\theta$ is the minimum of the objective function in problem \eqref{P-xpqxBxxgxh} over all regular $\bm{x}$, and note that $\theta\geq(\bm{q}^{-}\bm{B}^{\ast}\bm{p})^{1/2}\geq(\bm{q}^{-}\bm{p})^{1/2}>\mathbb{0}$. Then, all solutions to the problem are given by the system
\begin{align*}
\bm{x}^{-}\bm{p}\oplus\bm{q}^{-}\bm{x}
&=
\theta,
\\
\bm{B}\bm{x}\oplus\bm{g}
&\leq
\bm{x},
\\
\bm{x}
&\leq
\bm{h}.
\end{align*}

Since $\theta$ is the minimum of the objective function, the solution set remains unchanged if we replace the first equation by the inequality $\bm{x}^{-}\bm{p}\oplus\bm{q}^{-}\bm{x}\leq\theta$ and then substitute this inequality with equivalent two inequalities as follows
\begin{align*}
\bm{x}^{-}\bm{p}
&\leq
\theta,
\\
\bm{q}^{-}\bm{x}
&\leq
\theta,
\\
\bm{B}\bm{x}\oplus\bm{g}
&\leq
\bm{x},
\\
\bm{x}
&\leq
\bm{h}.
\end{align*}

After the application of Lemma~\ref{L-Axd} to the first two inequalities, the system becomes
\begin{align*}
\theta^{-1}\bm{p}
&\leq
\bm{x},
\\
\bm{x}
&\leq
\theta\bm{q},
\\
\bm{B}\bm{x}\oplus\bm{g}
&\leq
\bm{x},
\\
\bm{x}
&\leq
\bm{h}.
\end{align*}

We now combine the inequalities in the system as follows. The first and third inequalities are equivalent to the inequality $\bm{B}\bm{x}\oplus\bm{g}\oplus\theta^{-1}\bm{p}\leq\bm{x}$.

The other two inequalities are replaced by $\bm{x}^{-}\geq\theta^{-1}\bm{q}^{-}$ and $\bm{x}^{-}\geq\bm{h}^{-}$, which are equivalent to $\bm{x}^{-}\geq\bm{h}^{-}\oplus\theta^{-1}\bm{q}^{-}$, and thus to $\bm{x}\leq(\bm{h}^{-}\oplus\theta^{-1}\bm{q}^{-})^{-}$.

After the rearrangement of the system, we arrive at the double inequality
$$
\bm{B}\bm{x}\oplus\bm{g}
\oplus
\theta^{-1}\bm{p}
\leq
\bm{x}
\leq
(\bm{h}^{-}
\oplus
\theta^{-1}\bm{q}^{-})^{-}.
$$

The solution of the left inequality by using Theorem~\ref{T-Axbx} gives the result
$$
\bm{x}
=
\bm{B}^{\ast}\bm{u},
\qquad
\bm{u}
\geq
\bm{g}\oplus\theta^{-1}\bm{p}.
$$

Substitution of this solution into the right inequality yields the inequality
$$
\bm{B}^{\ast}\bm{u}
\leq
(\bm{h}^{-}
\oplus
\theta^{-1}\bm{q}^{-})^{-},
$$
which, by Lemma~\ref{L-Axd}, has the solution
$$
\bm{u}
\leq
((\bm{h}^{-}
\oplus
\theta^{-1}\bm{q}^{-})\bm{B}^{\ast})^{-}.
$$

By coupling both lower and upper bounds on $\bm{u}$, we arrive at the solution in the form of \eqref{E-xBu}. The solution set defined by \eqref{E-xBu} is non-empty if and only if
$$
\bm{g}\oplus\theta^{-1}\bm{p}
\leq
((\bm{h}^{-}
\oplus
\theta^{-1}\bm{q}^{-})\bm{B}^{\ast})^{-}.
$$

The left multiplication of this inequality by $(\bm{h}^{-}\oplus\theta^{-1}\bm{q}^{-})\bm{B}^{\ast}$ and application of one property of conjugate transposition lead to 
$$
(\bm{h}^{-}\oplus\theta^{-1}\bm{q}^{-})\bm{B}^{\ast}(\bm{g}\oplus\theta^{-1}\bm{p})
\leq
(\bm{h}^{-}\oplus\theta^{-1}\bm{q}^{-})\bm{B}^{\ast}((\bm{h}^{-}\oplus\theta^{-1}\bm{q}^{-})\bm{B}^{\ast})^{-}
=
\mathbb{1},
$$
which results in the new inequality
$$
(\bm{h}^{-}
\oplus
\theta^{-1}\bm{q}^{-})\bm{B}^{\ast}
(\bm{g}\oplus\theta^{-1}\bm{p})
\leq
\mathbb{1}.
$$

Since the left multiplication of the latter inequality by $((\bm{h}^{-}\oplus\theta^{-1}\bm{q}^{-})\bm{B}^{\ast})^{-}$ and the other property of conjugate transposition give the former inequality, both inequalities are equivalent. The obtained inequality can further be rewritten as
$$
\theta^{-2}\bm{q}^{-}\bm{B}^{\ast}\bm{p}
\oplus
\theta^{-1}(\bm{h}^{-}\bm{B}^{\ast}\bm{p}
\oplus
\bm{q}^{-}\bm{B}^{\ast}\bm{g})
\oplus
\bm{h}^{-}\bm{B}^{\ast}\bm{g}
\leq
\mathbb{1},
$$
and then represented by the equivalent system
\begin{align*}
\theta^{-2}\bm{q}^{-}\bm{B}^{\ast}\bm{p}
&\leq
\mathbb{1},
\\
\theta^{-1}(\bm{h}^{-}\bm{B}^{\ast}\bm{p}
\oplus
\bm{q}^{-}\bm{B}^{\ast}\bm{g})
&\leq
\mathbb{1},
\\
\bm{h}^{-}\bm{B}^{\ast}\bm{g}
&\leq
\mathbb{1}.
\end{align*}

Note that the third inequality in the system is valid by the condition of the theorem. After rearrangement of terms, the first two inequalities become
\begin{align*}
\theta
&\geq
(\bm{q}^{-}\bm{B}^{\ast}\bm{p})^{1/2},
\\
\theta
&\geq
\bm{h}^{-}\bm{B}^{\ast}\bm{p}\oplus\bm{q}^{-}\bm{B}^{\ast}\bm{g},
\end{align*}
and then finally lead to one inequality
$$
\theta
\geq
(\bm{q}^{-}\bm{B}^{\ast}\bm{p})^{1/2}
\oplus
\bm{h}^{-}\bm{B}^{\ast}\bm{p}\oplus\bm{q}^{-}\bm{B}^{\ast}\bm{g}.
$$

Since $\theta$ is assumed to be the minimum value of the objective function, the last inequality has to be satisfied as an equality, which gives \eqref{E-theta}.
\qed
\end{proof}

\subsection{Particular Cases}

We now examine particular cases, in which the feasible solution set is defined either by a linear inequality with a matrix or by two-sided boundary constraints.

First, we offer a new complete solution to problem \eqref{P-xpqxBxx}, which does not have the boundary constraints. A slight modification to the proof of Theorem~\ref{T-xpqxBxxgxh} yields the solution in the following form.
\begin{corollary}
\label{C-xpqxBxx}
Let $\bm{B}$ be a matrix with $\mathop\mathrm{Tr}(\bm{B})\leq\mathbb{1}$, $\bm{p}$ be a non-zero vector, and $\bm{q}$ a regular vector. Define a scalar
\begin{equation}
\theta
=
(\bm{q}^{-}\bm{B}^{\ast}\bm{p})^{1/2}.
\label{E-theta2}
\end{equation}

Then, the minimum in \eqref{P-xpqxBxx} is $\theta$ and all regular solutions are given by
\begin{equation*}
\bm{x}
=
\bm{B}^{\ast}\bm{u},
\qquad
\theta^{-1}\bm{p}
\leq
\bm{u}
\leq
\theta(\bm{q}^{-}\bm{B}^{\ast})^{-}.
\end{equation*}
\end{corollary}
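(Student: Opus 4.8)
The plan is to obtain Corollary~\ref{C-xpqxBxx} as the specialization of Theorem~\ref{T-xpqxBxxgxh} in which the boundary constraints $\bm{g}\leq\bm{x}\leq\bm{h}$ are rendered vacuous, while keeping track of the fact that in the general theorem the vectors $\bm{g}$ and $\bm{h}$ are assumed regular and this assumption is now being dropped. Formally, problem \eqref{P-xpqxBxx} is problem \eqref{P-xpqxBxxgxh} with $\bm{g}=\mathbb{0}$ and with $\bm{h}$ absent; so I would either invoke the theorem with $\bm{g}=\mathbb{0}$ and $\bm{h}$ chosen large enough to be inactive, or, more cleanly, re-run the proof of Theorem~\ref{T-xpqxBxxgxh} verbatim after deleting every term that carries a $\bm{g}$ or an $\bm{h}$. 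The condition $\bm{h}^{-}\bm{B}^{\ast}\bm{g}\leq\mathbb{1}$ of the theorem degenerates to $\mathbb{0}\leq\mathbb{1}$, which holds trivially, so no hypothesis on $\bm{g},\bm{h}$ survives; this is why the corollary only needs $\mathop\mathrm{Tr}(\bm{B})\leq\mathbb{1}$, $\bm{p}\neq\mathbb{0}$, and $\bm{q}$ regular.

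Concretely, I would carry out the following steps. First, let $\theta$ be the minimum of $\bm{x}^{-}\bm{p}\oplus\bm{q}^{-}\bm{x}$ over regular $\bm{x}$ subject to $\bm{B}\bm{x}\leq\bm{x}$, and note $\theta\geq(\bm{q}^{-}\bm{p})^{1/2}>\mathbb{0}$ by Lemma~\ref{L-xpqx}, so $\theta^{-1}$ is defined. Next, the solution set is described by $\bm{x}^{-}\bm{p}\leq\theta$, $\bm{q}^{-}\bm{x}\leq\theta$, $\bm{B}\bm{x}\leq\bm{x}$; applying Lemma~\ref{L-Axd} to the first two gives $\theta^{-1}\bm{p}\leq\bm{x}$ and $\bm{x}\leq\theta\bm{q}$. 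Combining $\theta^{-1}\bm{p}\leq\bm{x}$ with $\bm{B}\bm{x}\leq\bm{x}$ yields $\bm{B}\bm{x}\oplus\theta^{-1}\bm{p}\leq\bm{x}$, and Theorem~\ref{T-Axbx} (using $\mathop\mathrm{Tr}(\bm{B})\leq\mathbb{1}$) gives $\bm{x}=\bm{B}^{\ast}\bm{u}$ with $\bm{u}\geq\theta^{-1}\bm{p}$. Substituting into $\bm{x}\leq\theta\bm{q}$ and applying Lemma~\ref{L-Axd} once more gives $\bm{u}\leq\theta(\bm{q}^{-}\bm{B}^{\ast})^{-}$, whence the displayed two-sided bound on $\bm{u}$. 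Finally, the solution set is non-empty iff $\theta^{-1}\bm{p}\leq\theta(\bm{q}^{-}\bm{B}^{\ast})^{-}$; multiplying on the left by $\bm{q}^{-}\bm{B}^{\ast}$ and using $\bm{q}^{-}\bm{B}^{\ast}(\bm{q}^{-}\bm{B}^{\ast})^{-}=\mathbb{1}$ together with regularity of $\bm{q}^{-}\bm{B}^{\ast}$ (which needs $\bm{B}^{\ast}$ column-regular — here automatic since $\bm{I}\leq\bm{B}^{\ast}$) turns this into the equivalent scalar inequality $\theta^{-2}\bm{q}^{-}\bm{B}^{\ast}\bm{p}\leq\mathbb{1}$, i.e. $\theta\geq(\bm{q}^{-}\bm{B}^{\ast}\bm{p})^{1/2}$. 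Since $\theta$ is the minimum, this holds with equality, giving \eqref{E-theta2}.

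A point I would take care to verify rather than gloss over is that the left-multiplication step is genuinely reversible, so that the existence condition is \emph{equivalent} to, not merely implied by, $\theta\geq(\bm{q}^{-}\bm{B}^{\ast}\bm{p})^{1/2}$; this is exactly the symmetric argument used in the proof of Theorem~\ref{T-xpqxBxxgxh}, where left-multiplying back by $((\bm{h}^{-}\oplus\theta^{-1}\bm{q}^{-})\bm{B}^{\ast})^{-}$ recovers the original inequality, and here the analogous multiplier is $(\theta\bm{q}^{-}\bm{B}^{\ast})^{-}$. I would also record that $\bm{B}^{\ast}$ is automatically column-regular because $\bm{B}^{\ast}\geq\bm{I}$, so $\bm{q}^{-}\bm{B}^{\ast}$ is regular whenever $\bm{q}$ is, which is what makes $(\bm{q}^{-}\bm{B}^{\ast})^{-}$ meaningful and the conjugate-transposition identities applicable.

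I do not expect any serious obstacle: the whole argument is a strict sub-case of the proof already written out for Theorem~\ref{T-xpqxBxxgxh}, and the only thing to watch is bookkeeping — making sure that dropping $\bm{g}$ and $\bm{h}$ does not secretly use their regularity anywhere (it does not, since every place regularity of $\bm{g}$ or $\bm{h}$ could matter is a place where that vector has been removed) and that $\theta>\mathbb{0}$ so that all inversions are legitimate. The cleanest write-up is therefore simply: ``Repeat the proof of Theorem~\ref{T-xpqxBxxgxh} with $\bm{g}=\mathbb{0}$ and the upper bound $\bm{x}\leq\bm{h}$ omitted,'' followed by the short chain above to identify $\theta$.
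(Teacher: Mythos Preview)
Your proposal is correct and follows essentially the same approach as the paper, which simply states that ``a slight modification to the proof of Theorem~\ref{T-xpqxBxxgxh} yields the solution'' without spelling out the details. Your write-up in fact supplies exactly those details---dropping the $\bm{g}$ and $\bm{h}$ terms from each step and checking that nothing else is lost---so it is a faithful and slightly more explicit version of what the paper intends.
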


Although the expression at \eqref{E-theta2} offers the minimum in a different and more compact form than that at \eqref{E-theta0}, both representations prove to be equivalent.

To verify that these representations coincide, we first note that $\bm{B}^{\ast}\bm{B}^{\ast}=\bm{B}^{\ast}$ and then apply the properties of conjugate transposition to write
$$
\bm{B}^{\ast}(\bm{q}^{-}\bm{B}^{\ast})^{-}
=
\bm{B}^{\ast}(\bm{q}^{-}\bm{B}^{\ast}\bm{B}^{\ast})^{-}
\leq
(\bm{q}^{-}\bm{B}^{\ast})^{-}\bm{q}^{-}\bm{B}^{\ast}\bm{B}^{\ast}(\bm{q}^{-}\bm{B}^{\ast}\bm{B}^{\ast})^{-}
=
(\bm{q}^{-}\bm{B}^{\ast})^{-},
$$
which implies that the inequality $\bm{B}^{\ast}(\bm{q}^{-}\bm{B}^{\ast})^{-}\leq(\bm{q}^{-}\bm{B}^{\ast})^{-}$ holds.

Since $\bm{B}^{\ast}\geq\bm{I}$, the opposite inequality $\bm{B}^{\ast}(\bm{q}^{-}\bm{B}^{\ast})^{-}\geq(\bm{q}^{-}\bm{B}^{\ast})^{-}$ is valid as well. Both inequalities result in the equality $\bm{B}^{\ast}(\bm{q}^{-}\bm{B}^{\ast})^{-}=(\bm{q}^{-}\bm{B}^{\ast})^{-}$, and thus in the equality $(\bm{B}^{\ast}(\bm{q}^{-}\bm{B}^{\ast})^{-})^{-}=\bm{q}^{-}\bm{B}^{\ast}$. Finally, the right multiplication by $\bm{p}$ and extraction of square roots lead to the desired result.

Furthermore, we put $\bm{B}$ to be the zero matrix in \eqref{P-xpqxBxxgxh} and so arrive at problem \eqref{P-xpqxgxh}, which can be completely solved through a direct consequence of Theorem~\ref{T-xpqxBxxgxh}. Clearly, the new solution of \eqref{P-xpqxgxh} coincides with that given by Theorem~\ref{T-xpqxgxh}, and even involves somewhat less assumptions on the vectors under consideration.

\subsection{Numerical Examples and Graphical Illustration}

To illustrate the results obtained above, we present examples of two-dimensional problems in the setting of the idempotent semifield $\mathbb{R}_{\max,+}$ and provide geometric interpretation on the plane with a Cartesian coordinate system.

Consider problem \eqref{P-xpqxBxxgxh} formulated in terms of $\mathbb{R}_{\max,+}$ under the assumptions that
$$
\bm{p}
=
\left(
\begin{array}{c}
3
\\
14
\end{array}
\right),
\quad
\bm{q}
=
\left(
\begin{array}{r}
-12
\\
-4
\end{array}
\right),
\quad
\bm{g}
=
\left(
\begin{array}{r}
2
\\
-8
\end{array}
\right),
\quad
\bm{h}
=
\left(
\begin{array}{c}
6
\\
8
\end{array}
\right),
\quad
\bm{B}
=
\left(
\begin{array}{rr}
0 & -4
\\
-8 & -6
\end{array}
\right).
$$

Prior to solving the general problem, we examine several special cases.

We start with problem \eqref{P-xpqx} without constraints, which has a complete solution given by a consequence of Theorem~\ref{T-xpqxBxxgxh} (see also Lemma~\ref{L-xpqx}). According to this result, the minimum in the unconstrained problem is given by
$$
\theta_{1}
=
(\bm{q}^{-}\bm{p})^{1/2}
=
9,
$$ 
and attained if and only if the vector $\bm{x}$ satisfies the conditions
$$
\bm{x}_{1}^{\prime}
\leq
\bm{x}
\leq
\bm{x}_{1}^{\prime\prime},
\qquad
\bm{x}_{1}^{\prime}
=
\theta_{1}^{-1}\bm{p}
=
\left(
\begin{array}{r}
-6
\\
5
\end{array}
\right),
\quad
\bm{x}_{1}^{\prime\prime}
=
\theta_{1}\bm{q}
=
\left(
\begin{array}{r}
-3
\\
5
\end{array}
\right).
$$

A graphical illustration of the result is given in Fig.~\ref{F-SUPPBC} (left), where the solutions form a horizontal segment between the ends of the vectors $\bm{x}_{1}^{\prime}$ and $\bm{x}_{1}^{\prime\prime}$.
\begin{figure}
\setlength{\unitlength}{1mm}
\begin{center}
\begin{picture}(50,55)

\put(0,20){\vector(1,0){50}}
\put(25,0){\vector(0,1){55}}


\put(25,20){\thicklines\line(1,5){5.5}}
\put(30.8,48.5){\thicklines\vector(1,4){0}}

\put(25,20){\thicklines\vector(-3,-1){24}}

\put(1,12){\line(1,0){30}}
\put(1,12){\line(0,1){36}}

\put(31,48){\line(-1,0){30}}
\put(31,48){\line(0,-1){36}}

\put(25,20){\thicklines\line(-3,5){6}}
\put(18.8,30){\thicklines\vector(-3,4){0}}

\put(25,20){\thicklines\line(-6,5){12}}
\put(13,30){\thicklines\vector(-1,1){0}}

\put(31,48){\line(-1,-1){18}}
\put(1,12){\line(1,1){18}}

\put(13,30){\line(1,0){6}}
\put(13,30.15){\thicklines\line(1,0){6}}
\put(13,29.85){\thicklines\line(1,0){6}}

\put(0,8){$\bm{q}$}
\put(33,48){$\bm{p}$}

\put(9,32){$\bm{x}_{1}^{\prime}$}
\put(20,32){$\bm{x}_{1}^{\prime\prime}$}


\end{picture}
\hspace{15\unitlength}
\begin{picture}(50,55)

\put(0,20){\vector(1,0){50}}
\put(25,0){\vector(0,1){55}}


\put(1,12){\line(1,0){30}}
\put(1,12){\line(0,1){36}}

\put(31,48){\line(-1,0){30}}
\put(31,48){\line(0,-1){36}}

\put(0,8){$\bm{q}$}
\put(33,48){$\bm{p}$}


\put(25,20){\thicklines\vector(3,4){12}}

\put(37,36){\thicklines\line(-1,0){8}}
\put(37,36){\thicklines\line(0,-1){32}}

\multiput(37,36)(-1,0){9}{\line(0,-1){1}}
\multiput(37,36)(0,-1){32}{\line(-1,0){1}}

\put(25,20){\thicklines\vector(1,-4){4}}
\put(29,4){\thicklines\line(1,0){8}}
\put(29,4){\thicklines\line(0,1){32}}

\multiput(29,4)(1,0){9}{\line(0,1){1}}
\multiput(29,4)(0,1){32}{\line(1,0){1}}

\put(27,1){$\bm{g}$}
\put(39,36){$\bm{h}$}

\put(25,20){\thicklines\vector(1,0){4}}
\put(25,20){\thicklines\vector(1,4){4}}

\put(28.8,20){\thicklines\line(0,1){16}}
\put(29.2,20){\thicklines\line(0,1){16}}

\put(32,22){$\bm{x}_{2}^{\prime}$}
\put(26,38){$\bm{x}_{2}^{\prime\prime}$}

\end{picture}
\end{center}
\caption{Solutions to problems without constraints (left) and with two-sided boundary constraints (right).}
\label{F-SUPPBC}
\end{figure}
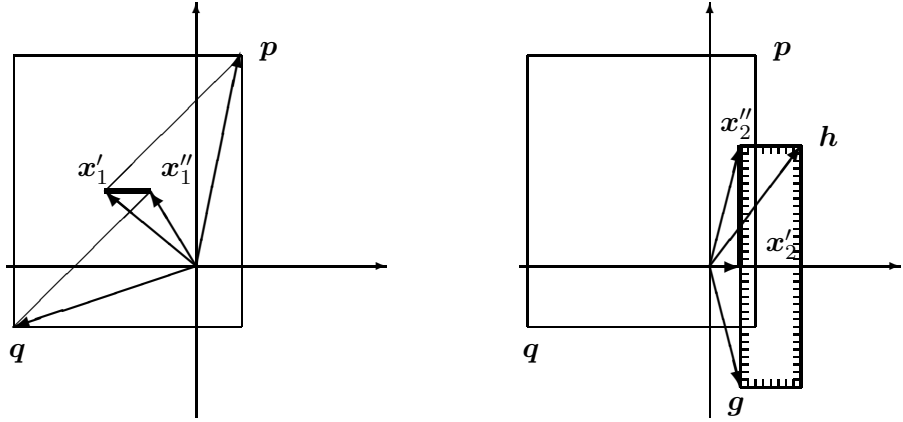

Furthermore, we consider the problem in the form \eqref{P-xpqxgxh} with two-sided boundary constraints $\bm{g}\leq\bm{x}\leq\bm{h}$. It follows from Theorem~\ref{T-xpqxgxh} (or as another consequence of Theorem~\ref{T-xpqxBxxgxh}) that the minimum in the problem is calculated as
$$
\theta_{2}
=
(\bm{q}^{-}\bm{p})^{1/2}
\oplus
\bm{h}^{-}\bm{p}
\oplus
\bm{q}^{-}\bm{g}
=
14.
$$

The solution set consists of those vectors $\bm{x}$ that satisfy the double inequality
$$
\bm{x}_{2}^{\prime}
\leq
\bm{x}
\leq
\bm{x}_{2}^{\prime\prime},
\qquad
\bm{x}_{2}^{\prime}
=
\bm{g}\oplus\theta_{2}^{-1}\bm{p}
=
\left(
\begin{array}{c}
2
\\
0
\end{array}
\right),
\quad
\bm{x}_{2}^{\prime\prime}
=
(\bm{h}^{-}\oplus\theta_{2}^{-1}\bm{q}^{-})^{-}
=
\left(
\begin{array}{c}
2
\\
8
\end{array}
\right).
$$

The solutions of the problem are indicated on Fig.~\ref{F-SUPPBC} (right) by a thick vertical segment on the left side of the rectangle that represents the feasible set.

We now examine problem \eqref{P-xpqxBxx} with the linear inequality constraints $\bm{B}\bm{x}\leq\bm{x}$. We calculate 
$$
\bm{B}^{\ast}
=
\bm{I}\oplus\bm{B}
=
\left(
\begin{array}{rr}
0 & -4
\\
-8 & 0
\end{array}
\right),
\qquad
\bm{q}^{-}\bm{B}^{\ast}
=
\left(
\begin{array}{cc}
12
&
8
\end{array}
\right).
$$

The application of Corollary~\ref{C-xpqxBxx} gives the minimum value
$$
\theta_{3}
=
(\bm{q}^{-}\bm{B}^{\ast}\bm{p})^{1/2}
=
11,
$$
which is attained if and only if $\bm{x}=\bm{B}^{\ast}\bm{u}$ for all $\bm{u}$ such that
$$
\bm{u}_{3}^{\prime}
\leq
\bm{u}
\leq
\bm{u}_{3}^{\prime\prime},
\qquad
\bm{u}_{3}^{\prime}
=
\theta^{-1}\bm{p}
=
\left(
\begin{array}{r}
-8
\\
3
\end{array}
\right),
\quad
\bm{u}_{3}^{\prime\prime}
=
\theta(\bm{q}^{-}\bm{B}^{\ast})^{-}
=
\left(
\begin{array}{r}
-1
\\
3
\end{array}
\right).
$$

After multiplication of $\bm{B}^{\ast}$ by both bounds on $\bm{u}$, we conclude that the problem has the unique solution
$$
\bm{x}_{3}
=
\bm{B}^{\ast}\bm{u}_{3}^{\prime}
=
\bm{B}^{\ast}\bm{u}_{3}^{\prime\prime}
=
\left(
\begin{array}{r}
-1
\\
3
\end{array}
\right).
$$

Figure~\ref{F-SPLICLIBC} (left) shows the solution point located on the upper side of the strip, which represents the solution of the inequality $\bm{B}\bm{x}\leq\bm{x}$. The columns of the matrices $\bm{B}=(\bm{b}_{1},\bm{b}_{2})$ and $\bm{B}^{\ast}=(\bm{b}_{1}^{\ast},\bm{b}_{2}^{\ast})$ are also included.
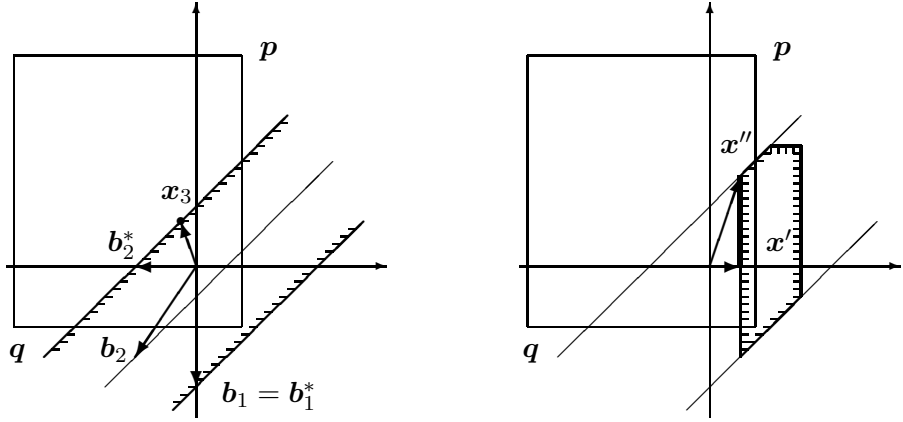
\begin{figure}
\setlength{\unitlength}{1mm}
\begin{center}
\begin{picture}(50,55)

\put(0,20){\vector(1,0){50}}
\put(25,0){\vector(0,1){55}}


\put(1,12){\line(1,0){30}}
\put(1,12){\line(0,1){36}}

\put(31,48){\line(-1,0){30}}
\put(31,48){\line(0,-1){36}}

\put(0,8){$\bm{q}$}
\put(33,48){$\bm{p}$}


\put(25,20){\thicklines\vector(-2,-3){8}}
\put(5,8){\thicklines\line(1,1){32}}
\multiput(6,9)(1,1){31}{\line(1,0){1}}

\put(25,20){\thicklines\vector(-1,0){8}}
\put(13,4){\line(1,1){30}}

\put(25,20){\thicklines\vector(0,-1){16}}
\put(22,1){\thicklines\line(1,1){25}}
\multiput(23,2)(1,1){24}{\line(-1,0){1}}

\put(13,22){$\bm{b}_{2}^{\ast}$}

\put(12,8){$\bm{b}_{2}$}

\put(28,2){$\bm{b}_{1}=\bm{b}_{1}^{\ast}$}

\put(23,26){\circle*{1}}

\put(25,20){\thicklines\vector(-1,3){2}}

\put(20,29){$\bm{x}_{3}$}

\end{picture}
\hspace{15\unitlength}
\begin{picture}(50,55)

\put(0,20){\vector(1,0){50}}
\put(25,0){\vector(0,1){55}}


\put(1,12){\line(1,0){30}}
\put(1,12){\line(0,1){36}}

\put(31,48){\line(-1,0){30}}
\put(31,48){\line(0,-1){36}}

\put(0,8){$\bm{q}$}
\put(33,48){$\bm{p}$}


\put(37,36){\thicklines\line(-1,0){4}}
\put(37,36){\thicklines\line(0,-1){20}}

\multiput(37,36)(-1,0){5}{\line(0,-1){1}}
\multiput(37,36)(0,-1){20}{\line(-1,0){1}}

\put(29,8){\thicklines\line(0,1){24}}

\multiput(29,9)(0,1){24}{\line(1,0){1}}

\put(25,20){\thicklines\vector(1,0){4}}
\put(25,20){\thicklines\vector(1,3){4}}

\put(28.8,20){\thicklines\line(0,1){12}}
\put(29.2,20){\thicklines\line(0,1){12}}

\put(32,22){$\bm{x}^{\prime}$}
\put(26,35){$\bm{x}^{\prime\prime}$}


\put(5,8){\line(1,1){32}}
\put(29,32){\thicklines\line(1,1){4}}
\multiput(30,33)(1,1){3}{\line(1,0){1}}

\put(22,1){\line(1,1){25}}
\put(29,8){\thicklines\line(1,1){8}}
\multiput(30,9)(1,1){8}{\line(-1,0){1}}

\end{picture}
\end{center}
\caption{Solutions to problems with linear inequality constraints (left) and with both linear inequality and two-sided boundary constraints (right).}
\label{F-SPLICLIBC}
\end{figure}

Finally, we consider general problem \eqref{P-xpqxBxxgxh}. To solve the problem, we calculate
$$
\bm{B}^{\ast}\bm{p}
=
\left(
\begin{array}{c}
10
\\
14
\end{array}
\right),
\qquad
\bm{h}^{-}\bm{B}^{\ast}\bm{p}
=
6,
\qquad
\bm{q}^{-}\bm{B}^{\ast}\bm{g}
=
14.
$$

It follows from Theorem~\ref{T-xpqxBxxgxh} that the minimum in the problem is given by
$$
\theta
=
(\bm{q}^{-}\bm{B}^{\ast}\bm{p})^{1/2}
\oplus
\bm{h}^{-}\bm{B}^{\ast}\bm{p}\oplus\bm{q}^{-}\bm{B}^{\ast}\bm{g}
=
14.
$$

This minimum is attained only at $\bm{x}=\bm{B}^{\ast}\bm{u}$, where $\bm{u}$ is any vector such that
$$
\bm{u}^{\prime}
\leq
\bm{u}
\leq
\bm{u}^{\prime\prime},
\qquad
\bm{u}^{\prime}
=
\bm{g}\oplus\theta^{-1}\bm{p}
=
\left(
\begin{array}{c}
2
\\
0
\end{array}
\right),
\quad
\bm{u}^{\prime\prime}
=
((\bm{h}^{-}\oplus\theta^{-1}\bm{q}^{-})\bm{B}^{\ast})^{-}
=
\left(
\begin{array}{c}
2
\\
6
\end{array}
\right).
$$
 
Turning to the solution of the problem, we arrive at the set of vectors $\bm{x}$ that satisfy the conditions
$$
\bm{x}^{\prime}
\leq
\bm{x}
\leq
\bm{x}^{\prime\prime},
\qquad
\bm{x}^{\prime}
=
\bm{B}^{\ast}\bm{u}^{\prime}
=
\left(
\begin{array}{c}
2
\\
0
\end{array}
\right),
\quad
\bm{x}^{\prime\prime}
=
\bm{B}^{\ast}\bm{u}^{\prime\prime}
=
\left(
\begin{array}{cc}
2
\\
6
\end{array}
\right).
$$

The solution is shown on Fig.~\ref{F-SPLICLIBC} (right) by the thick vertical segment on the left side of the polygon which describes the feasible set.

\section{Application to Location Analysis}
\label{S-ALA}

In this section, we apply the above results to solve minimax single facility location problems, which are often called the Rawls problems \cite{Hansen1980Location,Hansen1981Constrained}, but also known as Messenger Boy problems \cite{Elzinga1972Geometrical} and $1$-center problems \cite{Drezner2011Continuous}. We consider a new constrained problem on a multidimensional space with Chebyshev distance. A complete direct solution is obtained which extends the results in \cite{Krivulin2011Analgebraic,Krivulin2012Anew,Krivulin2013Direct} by taking into account a more general system of constraints.

Let $\bm{r}=(r_{i})$ and $\bm{s}=(s_{i})$ be vectors in $\mathbb{R}^{n}$. The Chebyshev distance ($L_{\infty}$, maximum, dominance, lattice, king-move, or chessboard metric) between the vectors is calculated as
\begin{equation}
\rho(\bm{r},\bm{s})
=
\max_{1\leq i\leq n}|r_{i}-s_{i}|.
\label{E-Chebyshev}
\end{equation}

Consider the following Chebyshev single facility location problem. Given $m$ vectors $\bm{r}_{j}=(r_{ij})\in\mathbb{R}^{n}$ and constants $w_{j}\in\mathbb{R}$ for each $j=1,\ldots,m$, a matrix $\bm{B}=(b_{ij})\in\mathbb{R}^{n\times n}$, and vectors $\bm{g}=(g_{i})\in\mathbb{R}^{n}$, $\bm{h}=(h_{i})\in\mathbb{R}^{n}$, the problem is to find the vectors $\bm{x}=(x_{i})\in\mathbb{R}^{n}$ that  
\begin{equation}
\begin{aligned}
&
\text{minimize}
&&
\max_{1\leq j\leq m}\left(\max_{1\leq i\leq n}|r_{ij}-x_{i}|+w_{i}\right),
\\
&
\text{subject to}
&&
x_{j}+b_{ij}
\leq
x_{i},
\\
&&&
g_{i}
\leq
x_{i}
\leq
h_{i},
\qquad
j=1,\ldots,n,
\quad
i=1,\ldots,n.
\end{aligned}
\label{P-Chebyshev}
\end{equation}

Note that the feasible location area is formed in $\mathbb{R}^{n}$ by the intersection of the hyper-rectangle defined by the boundary constraints with closed half-spaces given by the other inequalities. 

To solve the problem, we represent it in terms of the semifield $\mathbb{R}_{\max,+}$. First, we put \eqref{E-Chebyshev} in the equivalent form
$$
\rho(\bm{r},\bm{s})
=
\bigoplus_{i=1}^{n}(s_{i}^{-1}r_{i}\oplus r_{i}^{-1}s_{i})
=
\bm{s}^{-}\bm{r}\oplus\bm{r}^{-}\bm{s}.
$$

Furthermore, we define the vectors
$$
\bm{p}
=
w_{1}\bm{r}_{1}\oplus\cdots\oplus w_{m}\bm{r}_{m},
\qquad
\bm{q}^{-}
=
w_{1}\bm{r}_{1}^{-}\oplus\cdots\oplus w_{m}\bm{r}_{m}^{-}.
$$

The objective function in problem \eqref{P-Chebyshev} becomes
$$
\bigoplus_{i=1}^{m}w_{i}\rho(\bm{r}_{i},\bm{x})
=
\bigoplus_{i=1}^{m}w_{i}(\bm{x}^{-}\bm{r}_{i}\oplus\bm{r}_{i}^{-}\bm{x})
=
\bm{x}^{-}\bm{p}
\oplus
\bm{q}^{-}\bm{x}.
$$

We now combine the constraints $x_{j}+b_{ij}\leq x_{i}$ for all $j=1,\ldots,n$ into one inequality for each $i$, and write the obtained inequalities in terms of $\mathbb{R}_{\max,+}$ as
$$
b_{i1}x_{1}\oplus\cdots\oplus b_{in}x_{n}
\leq
x_{i},
\qquad
i=1,\ldots,n.
$$

After rewriting the above inequalities and the boundary constraints in matrix-vector form, we obtain the problem in the form \eqref{P-xpqxBxxgxh}, where all given vectors have real components. Since these vectors are clearly regular in the sense of $\mathbb{R}_{\max,+}$, they satisfy the conditions of Theorem~\ref{T-xpqxBxxgxh}, which completely solves the problem.

As an illustration, consider the two-dimensional problem with given points
$$
\bm{r}_{1}
=
\left(
\begin{array}{r}
-7
\\
12
\end{array}
\right),
\quad
\bm{r}_{2}
=
\left(
\begin{array}{c}
2
\\
10
\end{array}
\right),
\quad
\bm{r}_{3}
=
\left(
\begin{array}{r}
-10
\\
3
\end{array}
\right),
\quad
\bm{r}_{4}
=
\left(
\begin{array}{r}
-4
\\
4
\end{array}
\right),
\quad
\bm{r}_{5}
=
\left(
\begin{array}{r}
-4
\\
-3
\end{array}
\right),
$$
and constants $w_{1}=w_{3}=2$, $w_{2}=w_{4}=w_{5}=1$. For the sake of simplicity, we take the same matrix $\bm{B}$ and vectors $\bm{g},\bm{h}$ as in the examples considered above.

To reduce the location problem to problem \eqref{P-xpqxBxxgxh}, we first calculate the vectors
$$
\bm{p}
=
\left(
\begin{array}{r}
3
\\
14
\end{array}
\right),
\qquad
\bm{q}
=
\left(
\begin{array}{rr}
-12
\\
-4
\end{array}
\right).
$$

These vectors define two opposite corners of the minimum rectangle which encloses all points $w_{i}\bm{r}_{i}$ and $w_{i}^{-1}\bm{r}_{i}$. The rectangle is depicted in Fig.~\ref{F-MERSLP} (left).
\begin{figure}
\setlength{\unitlength}{1mm}
\begin{center}
\begin{picture}(50,55)

\put(0,20){\vector(1,0){50}}
\put(25,0){\vector(0,1){55}}


\put(1,12){\line(1,0){30}}
\put(1,12){\line(0,1){36}}

\put(31,48){\line(-1,0){30}}
\put(31,48){\line(0,-1){36}}

\put(0,8){$\bm{q}$}
\put(33,48){$\bm{p}$}

\put(15,48){\circle{1}}
\put(11,44){\circle*{1}}
\put(7,40){\circle{1}}
\put(7,40){\line(1,1){8}}

\put(31,42){\circle{1}}
\put(29,40){\circle*{1}}
\put(27,38){\circle{1}}
\put(27,38){\line(1,1){4}}

\put(9,30){\circle{1}}
\put(5,26){\circle*{1}}
\put(1,22){\circle{1}}
\put(1,22){\line(1,1){8}}

\put(21,30){\circle{1}}
\put(19,28){\circle*{1}}
\put(17,26){\circle{1}}
\put(17,26){\line(1,1){4}}

\put(19,16){\circle{1}}
\put(17,14){\circle*{1}}
\put(15,12){\circle{1}}
\put(15,12){\line(1,1){4}}

\put(7,45){$\bm{r}_{1}$}

\put(26,42){$\bm{r}_{2}$}

\put(2,28){$\bm{r}_{3}$}

\put(20,26){$\bm{r}_{4}$}

\put(13,15){$\bm{r}_{5}$}

\end{picture}
\hspace{15\unitlength}
\begin{picture}(50,55)

\put(0,20){\vector(1,0){50}}
\put(25,0){\vector(0,1){55}}


\put(1,12){\line(1,0){30}}
\put(1,12){\line(0,1){36}}

\put(31,48){\line(-1,0){30}}
\put(31,48){\line(0,-1){36}}

\put(0,8){$\bm{q}$}
\put(33,48){$\bm{p}$}


\put(37,36){\thicklines\line(-1,0){4}}
\put(37,36){\thicklines\line(0,-1){20}}

\multiput(37,36)(-1,0){5}{\line(0,-1){1}}
\multiput(37,36)(0,-1){20}{\line(-1,0){1}}

\put(29,8){\thicklines\line(0,1){24}}

\multiput(29,9)(0,1){24}{\line(1,0){1}}

\put(28.8,20){\thicklines\line(0,1){12}}
\put(29.2,20){\thicklines\line(0,1){12}}


\put(29,32){\thicklines\line(1,1){4}}
\multiput(30,33)(1,1){3}{\line(1,0){1}}

\put(29,8){\thicklines\line(1,1){8}}
\multiput(30,9)(1,1){8}{\line(-1,0){1}}


\put(11,44){\circle*{1}}

\put(29,40){\circle*{1}}

\put(5,26){\circle*{1}}

\put(19,28){\circle*{1}}

\put(17,14){\circle*{1}}

\end{picture}
\end{center}
\caption{Minimum enclosing rectangle (left) and solution of location problem (right).}
\label{F-MERSLP}
\end{figure}
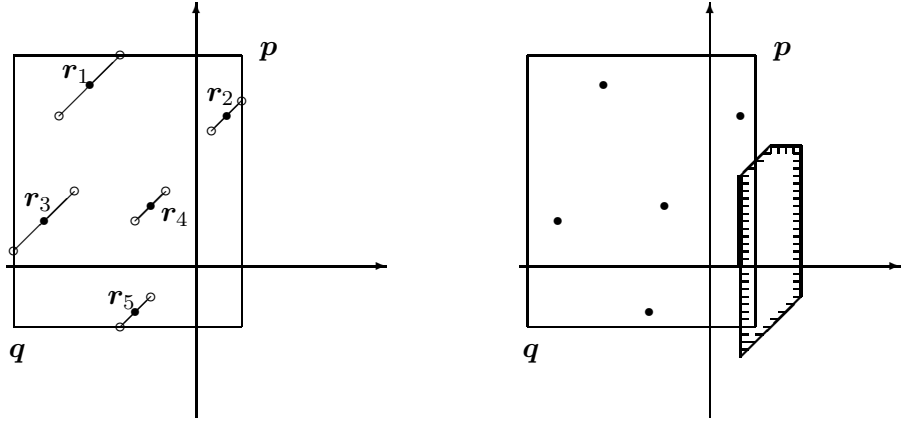

Note that the reduced problem coincides with that examined as an example in the previous section, and thus admits the same solution. We show the solution as a thick vertical segment and the given points as black dots in Fig.~\ref{F-MERSLP} (right).

To conclude this section, we write the solution given by Theorem~\ref{T-xpqxBxxgxh} to problem \eqref{P-Chebyshev} in the usual form.

We first represent the entries of the matrix $\bm{B}^{\ast}=(b_{ij}^{\ast})$ in terms of ordinary operations. It follows from the definition of the asterate operator that
$$
b_{ij}^{\ast}
=
\begin{cases}
\beta_{ij},
&
\text{if $i\ne j$};
\\
\max(\beta_{ij},0),
&
\text{if $i=j$};
\end{cases}
$$
where the numbers $\beta_{ij}$ are calculated as
$$
\beta_{ij}
=
\max_{1\leq k\leq n-1}\max_{\substack{1\leq i_{1},\ldots,i_{k-1}\leq n\\i_{0}=i,i_{k}=j}}(b_{i_{0}i_{1}}+\cdots+b_{i_{k-1}i_{k}}).
$$

Furthermore, we replace the operations of tropical mathematics by arithmetic operations in the rest of the statement of Theorem~\ref{T-xpqxBxxgxh}. By adding definitions for the vectors $\bm{p}$ and $\bm{q}$, we obtain the following statement.
\begin{theorem}
Let $\bm{B}$ be a matrix, and $\bm{g}$ and $\bm{h}$ be vectors such that
\begin{gather*}
\max_{1\leq i,k\leq n}\max_{\substack{1\leq i_{1},\ldots,i_{k-1}\leq n\\i_{0}=i_{k}=i}}(b_{i_{0}i_{1}}+\cdots+b_{i_{k-1}i_{k}})
\leq
0,
\\
\max_{1\leq i,j\leq n}(b_{ij}^{\ast}-h_{i}+g_{j})
\leq
0.
\end{gather*}

Define vectors $\bm{p}=(p_{i})$ and $\bm{q}=(q_{i})$ with elements
$$
p_{i}
=
\max_{1\leq j\leq m}(r_{ij}+w_{j}),
\qquad
q_{i}
=
\min_{1\leq j\leq m}(r_{ij}-w_{j}),
\qquad
i=1,\ldots,n;
$$
and a scalar
$$
\theta
=
\max_{1\leq i,j\leq n}\left((b_{ij}^{\ast}-q_{i}+p_{j})/2,
b_{ij}^{\ast}-h_{i}+p_{j},
b_{ij}^{\ast}-q_{i}+g_{j}\right).
$$

Then, the minimum in \eqref{P-Chebyshev} is $\theta$ and all solutions $\bm{x}=(x_{i})$ are given by
$$
x_{i}
=
\max_{1\leq j\leq n}(b_{ij}^{\ast}+u_{j}),
\qquad
i=1,\ldots,n;
$$
where the numbers $u_{j}$ for each $j=1,\ldots,n$ satisfy the condition
$$
\max(g_{j},p_{j}-\theta)
\leq
u_{j}
\leq
\min\left(-\max_{1\leq i\leq n}(b_{ij}^{\ast}-h_{i}),
\theta-\max_{1\leq i\leq n}(b_{ij}^{\ast}-q_{i})\right).
$$
\end{theorem}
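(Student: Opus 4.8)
The plan is to recognize the statement as Theorem~\ref{T-xpqxBxxgxh} written out in the semifield $\mathbb{R}_{\max,+}$: I would reduce problem~\eqref{P-Chebyshev} to problem~\eqref{P-xpqxBxxgxh}, check that its hypotheses hold, invoke Theorem~\ref{T-xpqxBxxgxh}, and then rewrite each vector and matrix expression in the conclusion in terms of ordinary addition, subtraction, maximum, and minimum.

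For the reduction I would use the computations already sketched in the section. Since $x_i^{-1}=-x_i$ in $\mathbb{R}_{\max,+}$, the Chebyshev distance is $\rho(\bm{r}_j,\bm{x})=\bm{x}^{-}\bm{r}_j\oplus\bm{r}_j^{-}\bm{x}$, so the objective equals $\bm{x}^{-}\bm{p}\oplus\bm{q}^{-}\bm{x}$ with $\bm{p}=\bigoplus_j w_j\bm{r}_j$ and $\bm{q}^{-}=\bigoplus_j w_j\bm{r}_j^{-}$; componentwise this reads $p_i=\max_j(r_{ij}+w_j)$ and $q_i=-\max_j(w_j-r_{ij})=\min_j(r_{ij}-w_j)$. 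Collecting the scalar inequalities $x_j+b_{ij}\leq x_i$ over $j$ gives $\bigoplus_j b_{ij}x_j\leq x_i$, that is $\bm{B}\bm{x}\leq\bm{x}$, which together with $\bm{g}\leq\bm{x}\leq\bm{h}$ is exactly the constraint set of~\eqref{P-xpqxBxxgxh}; moreover the regular vectors of $\mathbb{R}_{\max,+}$ are precisely the real vectors, so the two formulations share the same feasible set and the same minimum.

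Next I would verify the hypotheses of Theorem~\ref{T-xpqxBxxgxh}. All given data being finite, the vectors $\bm{p},\bm{q},\bm{g},\bm{h}$ are regular and $\bm{p}\ne\mathbb{0}$. Using $\mathop\mathrm{tr}\bm{B}^{k}=\max_i(\bm{B}^{k})_{ii}$ together with the fact that $(\bm{B}^{k})_{ii}$ is the maximal sum $b_{i_0i_1}+\cdots+b_{i_{k-1}i_k}$ over index sequences with $i_0=i_k=i$, the condition $\mathop\mathrm{Tr}(\bm{B})\leq\mathbb{1}$ becomes the first displayed inequality of the statement, and $\bm{h}^{-}\bm{B}^{\ast}\bm{g}=\max_{i,j}(b_{ij}^{\ast}-h_i+g_j)\leq\mathbb{1}=0$ is the second. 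Theorem~\ref{T-xpqxBxxgxh} then applies. To express $b_{ij}^{\ast}$, note that $(\bm{B}^{k})_{ij}$ is the maximal weight over length-$k$ index chains from $i$ to $j$, so from $\bm{B}^{\ast}=\bm{I}\oplus\bm{B}\oplus\cdots\oplus\bm{B}^{n-1}$ one gets $b_{ij}^{\ast}=\beta_{ij}$ for $i\ne j$, while for $i=j$ the term $(\bm{I})_{ii}=\mathbb{1}=0$ adds the comparison with $0$, giving $b_{ii}^{\ast}=\max(\beta_{ii},0)$.

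Finally I would unwind the conclusion. Expanding the scalar products $\bm{q}^{-}\bm{B}^{\ast}\bm{p}=\max_{i,j}(b_{ij}^{\ast}-q_i+p_j)$, $\bm{h}^{-}\bm{B}^{\ast}\bm{p}=\max_{i,j}(b_{ij}^{\ast}-h_i+p_j)$, $\bm{q}^{-}\bm{B}^{\ast}\bm{g}=\max_{i,j}(b_{ij}^{\ast}-q_i+g_j)$, and recalling that the square root in $\mathbb{R}_{\max,+}$ is halving, \eqref{E-theta} turns into the claimed formula for $\theta$. The solution $\bm{x}=\bm{B}^{\ast}\bm{u}$ reads $x_i=\max_j(b_{ij}^{\ast}+u_j)$, the lower bound $(\bm{g}\oplus\theta^{-1}\bm{p})_j=\max(g_j,p_j-\theta)$, and the $j$-th entry of the row vector $(\bm{h}^{-}\oplus\theta^{-1}\bm{q}^{-})\bm{B}^{\ast}$ equals $\max\big(\max_i(b_{ij}^{\ast}-h_i),\ -\theta+\max_i(b_{ij}^{\ast}-q_i)\big)$, whose conjugate transpose (a componentwise sign change, all entries being finite) is the claimed upper bound $\min\big(-\max_i(b_{ij}^{\ast}-h_i),\ \theta-\max_i(b_{ij}^{\ast}-q_i)\big)$. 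There is no genuine difficulty here: the argument is entirely bookkeeping, and the only points that need care are the faithful translation of the two hypotheses — in particular that the trace condition ranges over cycles of all lengths $1,\dots,n$, and that $b_{ii}^{\ast}$ carries the extra comparison with $0$ — and the upper bound, where conjugate transposition must be applied after, not before, the multiplication by $\bm{B}^{\ast}$.
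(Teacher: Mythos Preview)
Your proposal is correct and follows exactly the route the paper takes: the theorem is nothing more than Theorem~\ref{T-xpqxBxxgxh} specialized to $\mathbb{R}_{\max,+}$ after the reduction of problem~\eqref{P-Chebyshev} to problem~\eqref{P-xpqxBxxgxh} carried out in the text, and your translations of the hypotheses, the minimum $\theta$, the solution $\bm{x}=\bm{B}^{\ast}\bm{u}$, and the bounds on $\bm{u}$ into ordinary arithmetic are all accurate. The only proof content beyond bookkeeping is the invocation of Theorem~\ref{T-xpqxBxxgxh}, which you correctly identify.
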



\section*{Conclusions}

The paper was concerned with a new multidimensional tropical optimization problem with a nonlinear objective function and inequality constraints. A complete solution was obtained based on the technique, which reduces the problem to the solution of a linear inequality with a parametrized matrix. The solution is given in a closed form in terms of simple vector operations, which offers low computational complexity and provides for efficient software implementation.

Possible directions of future research include the further extension of the problem to account for new types of objective functions and constraints. The development of new real-world applications of the results is also of interest. 

\section*{Acknowledgments}

The author is very grateful to the three reviewers for their careful reading of a previous draft of this paper. He thanks the reviewers for their valuable comments and illuminating suggestions that have been incorporated in the final version.

\bibliographystyle{utphys}
\bibliography{Complete_solution_of_a_constrained_tropical_optimization_problem_with_application_to_location_analysis}

\end{document}